\begin{document}
\baselineskip 16pt

\newcommand\RR{\mathbb{R}}
\def\RN {\mathbb{R}^n}
\newcommand{\norm}[1]{\left\Vert#1\right\Vert}
\newcommand{\abs}[1]{\left\vert#1\right\vert}
\newcommand{\set}[1]{\left\{#1\right\}}
\newcommand{\Real}{\mathbb{R}}
\newcommand{\R}{\mathbb{R}}
\newcommand{\supp}{\operatorname{supp}}
\newcommand{\card}{\operatorname{card}}
\renewcommand{\L}{\mathcal{L}}
\renewcommand{\P}{\mathcal{P}}
\newcommand{\T}{\mathcal{T}}
\newcommand{\A}{\mathbb{A}}
\newcommand{\K}{\mathcal{K}}
\renewcommand{\S}{\mathcal{S}}
\newcommand{\Id}{\operatorname{I}}
\newcommand\wrt{\,{\rm d}}
\newcommand\Ad{\,{\rm Ad}}

\def\SL{\sqrt[m]L}
\newcommand{\mar}[1]{{\marginpar{\sffamily{\scriptsize
        #1}}}}
\newcommand{\li}[1]{{\mar{LY:#1}}}
\newcommand{\el}[1]{{\mar{EM:#1}}}
\newcommand{\as}[1]{{\mar{AS:#1}}}
 \newcommand{\comment}[1]{\vskip.3cm
\fbox{%
\color{red}
\parbox{0.93\linewidth}{\footnotesize #1}}}

\newcommand\CC{\mathbb{C}}
\newcommand\NN{\mathbb{N}}
\newcommand\ZZ{\mathbb{Z}}

\renewcommand\Re{\operatorname{Re}}
\renewcommand\Im{\operatorname{Im}}

\newcommand{\mc}{\mathcal}
\newcommand\D{\mathcal{D}}

\newtheorem{thm}{Theorem}[section]
\newtheorem{prop}[thm]{Proposition}
\newtheorem{cor}[thm]{Corollary}
\newtheorem{lem}[thm]{Lemma}
\newtheorem{lemma}[thm]{Lemma}
\newtheorem{exams}[thm]{Examples}
\theoremstyle{definition}
\newtheorem{defn}[thm]{Definition}
\newtheorem{rem}[thm]{Remark}

\numberwithin{equation}{section}
\newcommand\bchi{{\chi}}

\title[Weak type $(1,1)$   bounds   for   Schr\"odinger  groups ]
{Weak type $(1,1)$    bounds for   Schr\"odinger  groups
%for\\[2pt] non-negative self-adjoint operators
 }

\author[P. Chen, X.T. Duong, J. Li, L. Song and L.X. Yan]{Peng Chen, \ Xuan Thinh Duong, \
 Ji Li, \ Liang Song \ and \ Lixin Yan
}
\address{Peng Chen, Department of Mathematics, Sun Yat-sen (Zhongshan)
University, Guangzhou, 510275, P.R. China}
\email{chenpeng3@mail.sysu.edu.cn}
\address{Xuan Thinh Duong, Department of Mathematics, Macquarie University, NSW 2109, Australia}
\email{xuan.duong@mq.edu.au}
\address{Ji Li, Department of Mathematics, Macquarie University, NSW, 2109, Australia}
\email{ji.li@mq.edu.au}
\address{Liang Song, Department of Mathematics, Sun Yat-sen (Zhongshan)
University, Guangzhou, 510275, P.R. China}
\email{songl@mail.sysu.edu.cn}
\address{
Lixin Yan, Department of Mathematics, Sun Yat-sen (Zhongshan) University, Guangzhou, 510275, P.R. China
and Department of Mathematics, Macquarie University, NSW 2109, Australia}
\email{mcsylx@mail.sysu.edu.cn
}

  \date{\today}
 \subjclass[2010]{42B37, 35J10,  47F05}
\keywords{ Weak type $(1,1)$ bounds, Schr\"odinger  group,   Gaussian upper bounds,  space of homogeneous type}

\begin{abstract}
Let $L$ be a non-negative self-adjoint operator acting on $L^2(X)$
where $X$ is a space of homogeneous type with a dimension $n$. Suppose that
the heat kernel of  $L$
satisfies  a  Gaussian upper bound. It is known that
%it was shown  in \cite{CCO} that
the operator
 $(I+L)^{-s } e^{itL}$ is bounded on $L^p(X)$ for $s>  n|{1/ 2}-{1/p}| $ and $ p\in (1, \infty)$ (see for example, \cite{CCO, H, Sj}).
The  index $s=  n|{1/ 2}-{1/p}|$  was only  obtained recently in \cite{CDLY, CDLY2},
 %that the operator
 %$(I+L)^{-s } e^{itL}$ is bounded on $L^p(X)$,
 and this range of $s$ is sharp since it is precisely the range known in the case when $L$
 is  the Laplace operator $\Delta$ on  $X=\mathbb R^n$ (\cite{Mi1}).
 In this paper we
 establish that for $p=1,$ the operator  $(1+L)^{-n/2}e^{itL}$
is of weak type $(1, 1)$, that is,
  there is a constant $C$, independent of $t$ and $f$ so that
\begin{eqnarray*}
  \mu\Big(\Big\{x: \big|(I+L)^{-n/2 }e^{itL} f(x)\big|>\lambda \Big\}\Big)  \leq C\lambda^{-1}(1+|t|)^{n/2} {\|f\|_{L^1(X)} }, \ \ \ t\in{\mathbb R}
\end{eqnarray*}
(for $\lambda > 0$ when $\mu (X) = \infty$ and   $\lambda>\mu(X)^{-1}\|f\|_{L^1(X)}$ when $\mu (X) < \infty$).
Moreover, we also show the index $n/2$ is sharp when $L$ is the Laplacian on ${\mathbb R^n}$ by providing an example.
%, which was   proved    in \cite{CDLY} in the case that when $L$ satisfies  a  Gaussian upper bound.

Our results are applicable to Schr\"odinger  group for large classes of operators
 including  elliptic
 operators on compact manifolds, Schr\"odinger operators with non-negative potentials
 and
 Laplace operators acting on Lie groups of polynomial growth or irregular non-doubling domains
 of Euclidean spaces.

 %In particular, our results apply to the case that $L$ is a  Schr\"ordinger  operator $-\Delta +V$ on
 %${\mathbb R^n} $  with a non-negative, locally integrable potential, and
 %the Laplace operator with Dirichlet
  %boundary condition $\Omega\subseteq {\Bbb R}^n$.
 %This    extends  classical  results due to Chanillo, Kurtz and Sampson   (\cite{CKS}) for the Laplacian on  the Euclidean space ${\mathbb R}^n$.
 \end{abstract}

\maketitle

 % \tableofcontents

\section{Introduction }
\setcounter{equation}{0}

Let $L$ be a non-negative self-adjoint operator on the Hilbert space $L^2(X),$ where $X$
 is a metric measure space  with a distance $d$ and a measure  $\mu$. Consider
 the Schr\"odinger equation in $X\times {\mathbb R},$
 \begin{eqnarray*}\label{e1.00}
\left\{
\begin{array}{ll}
  i{\partial_t u } + L u=0,\\[4pt]
 u|_{t=0}=f
\end{array}
\right.
\end{eqnarray*}
with initial data $f$. Then
the solution can be formally written as
 \begin{equation}\label{e1.1n}
 u(x,t)=e^{itL}f(x) =   \int_0^{\infty}    e^{it\lambda}dE_L(\lambda) f(x), \ \ \ \ t\in{\mathbb R}
   \end{equation}
 for $f\in L^2(X)$, where   $E_L$ denotes the resolution of the identity associated with  $L.$
   By the spectral theorem (\cite{Mc}),  the operator   $  e^{itL}$  is  continuous on $L^2(X)$,  and forms   the Schr\"odinger group.
   A natural problem is to study the mapping properties
of families of operators derived from the Schr\"odinger group
  on various functional spaces defined on $X$.  This   has attracted a lot of attention in the last   decades,
 and  has been a  very active research topic  in harmonic analysis and partial differential equations-- see for example,
    \cite{A, Bl, Br, BDN, CCO, DN,  EK, H,  H1, JN, JN2, La, Lo, Mi1, O, Sj}.

 In 1960   H\"ormander (\cite{H1}) addressed  this problem  for the
 Laplace operator $\Delta=-\sum_{i=1}^n\partial_{x_i}^2$ on the Euclidean space $\mathbb R^n$,
 and   proved  that   the Schr\"odinger group  $e^{it\Delta}$ is bounded on $L^p({\mathbb R}^n)$ only for
 $p=2$. For $p\not= 2, $
 %Lanconelli \cite{La} showed that the operator  $e^{it\Delta}$ maps the Sobolev space $L^p_{2s}(\RN)$ into $L^p(\RN)$ whenever $s$ is sufficiently large, and this
 %was improved by
it is well-known (see for example,  \cite{Br, La, Sj}) that for $s > n|{1/ 2}-{1/p}|$, the operator
 $e^{it\Delta}$ maps the Sobolev space $L^p_{2s}(\RN)$ into $L^p(\RN)$. Equivalently, this means that
$(I+\Delta)^{-s } e^{it\Delta}$ is bounded on $L^p(\RN)$, and   this is not the case if  $0<s<  n|{1/ 2}-{1/p}|$.
 The sharp endpoint  $L^p$-Sobolev estimate
 is due to   Miyachi (\cite{Mi1}), which  states that for
every $p\in (1, \infty)$,
\begin{eqnarray}\label{e1.2n}
 \left\|  (1+\Delta)^{-s} e^{it\Delta} f\right\|_{L^p(\mathbb R^n)} \leq C  (1+|t|)^{s}\|f\|_{L^p (\mathbb R^n)},
  \ \ \ t\in{\mathbb R}, \ \ \ s= n\big|{1\over  2}-{1\over  p}\big|
\end{eqnarray}
for some positive constant $C=C(n,p)$ independent of $t$. The estimate \eqref{e1.2n} is sharp both
for the growth in $t$ and for the  derivatives (see \cite[p. 169-170]{Mi1}).

% The boundedness for the Schr\"odinger group  $e^{it\Delta}$  on $\mathbb R^n$ were generalized to more general operators than the Laplacian
%which act on more general spaces than ${\mathbb R}^n$.

Parallel to the Laplacian on $\mathbb R^n$, the boundedness of the Schr\"odinger group was investigated for different operators
in more general settings.
Depending on the nature of the assumptions regarding the assumption of $e^{-tL}$
and the underlying space $X$, there
 are various nuances of  the mapping properties
of the Schr\"odinger group $e^{-tL}$
on $L^p$ spaces  presently available in the literature.
 For example,
   when $L$ is an elliptic operator of order $m$ with constant coefficients on ${\mathbb R^n}$,
the $L^p$ boundedness of   $(1+ L)^{-s} e^{itL}$ was studied in  \cite{BE, Br, H, Mi1, Sj})  and the references therein;
%is bounded in $L^p({\mathbb R^n})$ if $s\geq  n\left|{1/ 2}-{1/ p}\right|$;
when $L$ are the sub-Laplacian on Lie groups with polynomial growth and the Laplace-Beltrami operator on
 manifolds with non-negative Ricci curvature, similar results as in \eqref{e1.2n}
for $s> n\left|{1/ 2}-{1/ p}\right|$ and $1<p<\infty$ were first announced by Lohou\'e in \cite{Lo},
then proved by Alexopoulos   in \cite{A}.
%There, the method is to replace Fourier analysis by the finite propagation speed of the associated wave equation.
   In the abstract setting
of operators on metric measure spaces,   Carron, Coulhon and Ouhabaz \cite{CCO}  showed  that for every $1<p<\infty,$
\begin{eqnarray} \label{e1.555}
 \left\| (I+L)^{-s }e^{itL} f\right\|_{L^p(X)} \leq C (1+|t|)^{s} \|f\|_{L^p(X)}, \ \ \ t\in{\mathbb R}, \ \
  \ s> n\big|{1\over  2}-{1\over  p}\big|,
\end{eqnarray}
  provided
  %$L$ is a non-negative self-adjoint operator and
 %  We  suppose that
the semigroup $e^{-tL}$, generated by $-L$ on $L^2(X)$ has the kernel  $p_t(x,y)$
which  satisfies
the  Gaussian upper bound \eqref{GE}, i.e.
%,  a continuous function $X\times X \to {\mathbb C}$
%such that
 $$
 e^{-tL}f(x)=\int_X H_t(x,y)f(y)\, d\mu(y),  \ \  \ {\rm for\ every}\ f\in L^2(X), \ {\rm and\ a.e. \ } x\in X,
  $$
 where $H_t(x,y)$   satisfies
a Gaussian upper bound
\begin{equation*}
 \label{GE}
 \tag{${\rm GE}_m$}
|H_t(x,y)| \leq h_t(x,y)={C\over V(x,t^{1/m})} \exp\left(-c \, {  \left({d(x,y)^{m}\over    t}\right)^{1\over m-1}}\right)
\end{equation*}
for every $t>0, x, y\in X$, where $c, C$ are   positive constants and $m\geq 2.$
Such estimate  \eqref{GE} is typical for elliptic or sub-elliptic differential operators of order $m$
(see for example, \cite{A, BDN, CCO, DN,  D, DM,  DOS, FS,  JN, JN2, O,  Si, Sj, TSC}
and the references therein).

Recently, the problem whether  estimate \eqref{e1.555} holds with  $s =  n|{1/2}-{1/p}|$ was
solved in  \cite[Theorem 1.1]{CDLY}. More specifically, if $L$ satisfies the   Gaussian estimate \eqref{GE}, then
 for every  $p\in (1, \infty)$ there exists a  constant $C=C(n,m, p)>0$ independent of $t$ such that
 \begin{eqnarray} \label{e1.5}
 \left\| (I+L)^{-s }e^{itL} f\right\|_{L^p(X)} \leq C (1+|t|)^{s} \|f\|_{L^p(X)}, \ \ \ t\in{\mathbb R}, \ \
  \ s= n\big|{1\over  2}-{1\over  p}\big|.
\end{eqnarray}
 For $p=1$, it was shown in \cite{CDLY2} that the operator $(I+L)^{-n/2 }e^{itL} $ is bounded from $H^1_L(X)$ into
$L^1(X)$ where $H^1_L(X)$ is a class of Hardy spaces associated to the operator $L$.  Based on this result,
together with interpolation, a different proof of
  the estimate \eqref{e1.5} was obtained.
 %This    extends  classical  result  due to
  %Feffermann and Stein\cite{FS}, and
  %Miyachi \cite{Mi1} for the Laplacian
%on the Euclidean spaces ${\mathbb R}^n$.
 % This    extends  classical  result  due to
  %Feffermann and Stein\cite{FS}, and
%  Miyachi\cite{Mi1} for the Laplacian on the Euclidean spaces ${\mathbb R}^n$.

The aim of this article is to investigate  the Schr\"odinger  groups $e^{itL}$
 on $L^1$ spaces
to  show that under the assumption of  a Gaussian upper bound  \eqref{GE} of $L$,
the operator
$(I+L)^{-n/2 }e^{itL}$ is of weak-type $(1,1)$.  Our result can be stated as  follows.

\begin{thm}\label{th1.1}
Suppose  that $(X, d, \mu)$ is  a  space of homogeneous type  with a dimension $n$.  Suppose that $L$
satisfies the property \eqref{GE}.
Then the operator  $(I+L)^{-n/2}e^{itL}$
  is of weak type $(1, 1)$, that is, there is a constant $C$, independent of $t$ and $f$, so that
  \begin{eqnarray} \label{e1.6}
 \mu\Big(\Big\{x: |(I+L)^{-n/2 }e^{itL} f(x)|>\lambda \Big\}\Big)\leq C\lambda^{-1}(1+|t|)^{n/2} {\|f\|_{L^1(X)} }, \ \ \ t\in{\mathbb R}.
\end{eqnarray}
for $\lambda > 0$ when $\mu (X) = \infty$ and   $\lambda>\mu(X)^{-1}\|f\|_{L^1(X)}$ when $\mu (X) < \infty$.
\end{thm}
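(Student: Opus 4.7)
The plan is to run a Calder\'on--Zygmund decomposition tuned to the target weak-type bound and then to reduce the bad-part analysis to a H\"ormander-type kernel estimate for $T_t := (I+L)^{-n/2}e^{itL}$. At the modified height $\widetilde{\lambda} := \lambda/(1+|t|)^{n/2}$, I decompose $f = g + \sum_j b_j$ in the usual way, so that $\|g\|_\infty \lesssim \widetilde\lambda$, each $b_j$ has mean zero and is supported in a ball $B_j = B(x_j, r_j)$ with $\|b_j\|_1 \lesssim \widetilde\lambda\,\mu(B_j)$, the balls have bounded overlap, and $\sum_j \mu(B_j) \lesssim (1+|t|)^{n/2}\lambda^{-1}\|f\|_1$. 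Since $e^{itL}$ is unitary and $(I+L)^{-n/2}$ is a contraction on $L^2(X)$, the operator $T_t$ is a contraction on $L^2(X)$ uniformly in $t$; hence $\|T_t g\|_2^2 \leq \|g\|_\infty \|g\|_1 \lesssim \widetilde\lambda\|f\|_1$, and a Chebyshev inequality closes the good part with room to spare.

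For the bad part I split the atoms at the threshold $\rho := (1+|t|)^{1/m}$. For the \emph{small} atoms ($r_j \leq \rho$) I exploit bounded overlap to obtain $\|\sum_{r_j \leq \rho} b_j\|_2^2 \lesssim \widetilde\lambda\|f\|_1$, so the same $L^2$-Chebyshev argument yields the desired bound without any enlargement of balls. For the \emph{large} atoms ($r_j > \rho$) I enlarge each $B_j$ only by a fixed constant factor, $B_j^\ast := \kappa B_j$, so that $\sum_j \mu(B_j^\ast) \lesssim (1+|t|)^{n/2}\lambda^{-1}\|f\|_1$ as permitted by the target. Using the cancellation $\int b_j \, d\mu = 0$ and writing
\begin{equation*}
T_t b_j(x) = \int_{B_j}\bigl(K_t(x,y) - K_t(x,x_j)\bigr)\,b_j(y)\,d\mu(y), \qquad x \notin B_j^\ast,
\end{equation*}
a Fubini-type argument reduces the whole problem, via Chebyshev, to the H\"ormander-type inequality
\begin{equation}\label{eq:hormander-plan}
\sup_{y\in B(x_j, r)} \int_{d(x,x_j) \geq 2r}|K_t(x,y) - K_t(x,x_j)|\,d\mu(x) \leq C(1+|t|)^{n/2}, \qquad r \geq \rho.
\end{equation}

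Establishing \eqref{eq:hormander-plan} is the technical heart of the argument and, I expect, the main obstacle. My approach would be to decompose $T_t$ dyadically in the spectral variable: with $\eta \in C_c^\infty((1/2, 2))$ such that $1 = \eta_0(\lambda) + \sum_{k\geq 1}\eta(2^{-k}\lambda)$, write $T_t = \eta_0(L)T_t + \sum_{k\geq 1} 2^{-kn/2}\widetilde\eta(2^{-k}L)e^{itL}$ up to bounded smooth spectral factors. The Gaussian upper bound \eqref{GE} combined with the off-diagonal $L^2$-estimates for spectrally localized Schr\"odinger propagators developed in \cite{CDLY, CDLY2} furnishes precise kernel control for each dyadic piece on annuli in the dimensionless variable $2^{k/m}d(x,y)$. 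The mean-zero cancellation in \eqref{eq:hormander-plan} contributes an extra gain behaving essentially like $\min(1, 2^{k/m}r_j)$ on each dyadic block, and the threshold $r_j \geq \rho = (1+|t|)^{1/m}$ is chosen precisely so that the time-$t$ Schr\"odinger dispersion at every spectral scale remains confined inside the enlarged ball; together these permit the geometric series in $k$ to converge to a constant of the desired size $(1+|t|)^{n/2}$. Effectively the argument is a quantitative $L^1$-version of what was carried out in \cite{CDLY2} for Hardy-space atoms; the extra difficulty arises because Calder\'on--Zygmund atoms carry only one order of cancellation, so the dyadic summation in $k$ has to be handled more delicately.
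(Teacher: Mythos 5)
Your plan departs from the paper's proof in two places where the departure is not a matter of taste but of substance, and there is also a false intermediate claim.

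\textbf{The small-atom $L^2$ claim is wrong.} You assert that bounded overlap gives $\bigl\|\sum_{r_j\le\rho}b_j\bigr\|_2^2\lesssim\widetilde\lambda\|f\|_1$. A Calder\'on--Zygmund decomposition controls only $\|b_j\|_1\lesssim\widetilde\lambda\,\mu(B_j)$; it gives no $L^2$ control on $b_j$ whatsoever (take $f\in L^1\setminus L^2_{\rm loc}$). The paper cannot avoid $(I+L)^{-n/2}$ here: the $L^2$ estimate of $\sum_j(I+L)^{-n/2}b_j$ for small balls relies on the pointwise kernel bound and the $L^2_x$-bound of Lemma~\ref{le2.2}, and the factor $(1+|t|)^{n/2}$ enters through the doubling comparison $V(y,r_{B_j})/V(y,1)\lesssim(1+|t|)^{n/2}$, valid precisely when $r_{B_j}\lesssim\sqrt{1+|t|}$. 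That is why the split is at $\sqrt{1+|t|}$ for every $m$, not at $(1+|t|)^{1/m}$.

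\textbf{The large-atom step as formulated will not close.} You plan to enlarge each $B_j$ only by a fixed constant $\kappa$ and to compensate by a H\"ormander estimate $\int_{d(x,x_j)\ge 2r}|K_t(x,y)-K_t(x,x_j)|\,d\mu(x)\le C(1+|t|)^{n/2}$. The paper explicitly discusses (around \eqref{e1.9} and the \cite{CKS} example) that these kernels fail the H\"ormander regularity condition, and its proof does \emph{not} use the mean-zero property of the $b_j$'s at all. Instead it enlarges the exceptional set to $\Omega_t=\bigcup_j\sqrt{1+|t|}\,B_j^*$ (measure cost $\sim(1+|t|)^{n/2}\sum_j\mu(B_j)$, exactly the allowed budget), and then proves in Lemma~\ref{le3.2} a size estimate on the kernel itself — not a difference — over the region $d(x,y)>c_1\sqrt{1+|t|}\,2^k$. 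The $\sqrt{1+|t|}$ enlargement is what makes the dyadic sum converge: after the frequency cutoff $\varphi_0(2^{-m(k-k_0)/(m-1)}L)$, the maximal group velocity among the kept frequencies is $\sim 2^{k-k_0}$, so the dispersive front at time $t$ reaches distance $\sim|t|\,2^{k-k_0}\sim\sqrt{1+|t|}\,2^k$. A constant enlargement does not put $x$ past the dispersive front, and then neither the kernel size nor the one order of cancellation you invoke can rescue the $\ell$-sum. In the Euclidean model one can check directly that $\int_{|x|\ge 2r}|K_t(x-y)-K_t(x)|\,dx$ for $|y|\le r$ fails to be finite uniformly, because the phase gradient of $K_t$ grows like $|x|/t$ and destroys the cancellation once $|x|\gtrsim t/r$.

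\textbf{Missing ingredients.} Beyond the two points above, the paper's proof decomposes $(I+L)^{-n/2}e^{itL}b_j$ into $(I+L)^{-n/2}e^{itL}e^{-2^{mk}L}b_j$ plus the complementary piece, using the heat semigroup at the scale of $B_j$ as a substitute for mean-zero cancellation (that is precisely the Duong--McIntosh mechanism of Lemma~\ref{le3.1}), and further splits the remainder into a low-frequency part $F_k(L)$ treated by the $L^1$ kernel bound of Lemma~\ref{le3.2} and a high-frequency part $G_k(L)$ treated in $L^2$ via Lemma~\ref{le2.3}. None of these pieces appear in your sketch. Your dyadic plan is in the right spirit — and the dispersion heuristic you state ("stay inside the enlarged ball") is exactly the right one — but it must be paired with the $\sqrt{1+|t|}$ enlargement and the $\varphi_0/\varphi_1$ frequency cutoff, not with a constant enlargement and a H\"ormander difference estimate, which as noted is known to fail here.
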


\medskip

%This   result \eqref{e1.6} is sharp for the growth $(1+|t|)^{n/2}$ in $t$,
%in the sense that  when $L$ is the Laplacian on ${\mathbb R^n}$ by providing a counter example (see Proposition~\ref{prop3.1} below).

 We
 would like to
 mention that in \cite[Theorem 1.1]{CKS},
  Chanillo,  Kurtz and  Sampason    proved a weak type $(1, 1)$ result for
 the multiplier operator ${\widehat {Tf}}(\xi)= m(\xi) {\widehat f}(\xi)$ where $m$ is  given by
 $$
m(\xi)=\theta(\xi) |\xi|^{-1} e^{i|\xi|^2} {\hat f} (\xi)
 $$
 on ${\mathbb R}$,
and  $\theta$ is a $C^{\infty}$ function satisfying $\theta(\xi)=1$ for $|\xi|\geq 1$ and  $\theta(\xi)=0$ for $|\xi|\leq 1/2$.
This implies  that
 % when  $L$ is the   operator $-{d^2/dx^2}$ on ${\mathbb R}$,
   the operator $(I-{d^2/dx^2})^{-1/2 }e^{-i{d^2/dx^2}} $ is of weak type $(1,1)$, that is,
 for every $\lambda>0,$
 \begin{eqnarray}  \label{e1.8}
  \Big| \Big\{x: \big|(I-{d^2/dx^2})^{-1/2 }e^{-i{d^2/dx^2}} f(x)\big|>\lambda \Big\} \Big| \leq {C\over \lambda} \|f\|_{L^1({\mathbb R})}.
\end{eqnarray}
An examination of their proofs shows    the dimension may be greater than $1$.
As pointed out  in \cite[p. 129]{CKS},
standard arguments which work  for the classical Calder\'on-Zygmund kernels
can not be used  to show the estimate  \eqref{e1.8} since they will fail for large intervals.
Indeed, the kernels $K_{(I-{d^2/dx^2})^{-1/2 }e^{-i{d^2/dx^2}}}(x)$
are not integrable away from the original, and    they
  do not satisfy a regularity condition
 \begin{eqnarray}  \label{e1.9}
 \int_{|x|\geq 2|y|^{1/(1-\theta)}} |K(x-y)-K(x)|dx\leq C<\infty
 \end{eqnarray}
 for any $0<\theta<1$ (\cite{CKS, JS}). To overcome it, they based their proof on the argument used to prove  the Bochner-Riesz multipliers
in the work of Fefferman (\cite[Theorem 3]{F}).

Our proof of    Theorem~\ref{th1.1}
  is different from that of    Chanillo-Kurtz-Sampson \cite{CKS}
where their result  relies   heavily on Fourier analysis. In our setting,   we do not have  Fourier transform  at our disposal.
We also do not assume that the heat kernel $p_t(x,y)$
	  satisfies the standard regularity condition, thus standard techniques of
Calder\'on--Zygmund theory (\cite{St2}) are not applicable.  The
lack  of smoothness of the kernel
will be overcome in the proof   by using the theory of
 singular integrals with rough kernels, which
  lies beyond the scope of the standard Calder\'on-Zygmund
theory (see for example,
    \cite{   DM, DOS, DR} and the references therein).
In the proof of Theorem~\ref{th1.1}, one of the main ingredients is to show that for $k\geq 0$ and $c_1>1,$
\begin{eqnarray}\label{e1.10}
\int_{d(x,y)>  c_1\sqrt{1+|t|}2^k} \big|K_{e^{itL}F_k(L)}(x,y)\big| d\mu(y)\leq C(1+|t|)^{n/2}
\end{eqnarray}
with a constant $C>0$ independent of $k$ and $t,$
where
 $F_k(L) =(1+L)^{-n/2}(1-e^{-2^{mk}L})\varphi_0(2^{-m(k-k_0)/(m-1)}L)$, and
 $\varphi_0$ is a smooth function  with supp $\varphi_0\subset [0,1]$,  $\varphi_0(\lambda)=1$ on $[0,1/2]$,
and $2^{k_0}\sim  \sqrt{1+|t|}$ (see Lemma~\ref{le3.2} below), and
this   is  a crucial estimate in the proof of Theorem~\ref{th1.1}.

The paper is organized as follows.
In Section 2 we provide some preliminary results  on the kernel estimates for the  operators related to
$(I+tL)^{-n/2}$, which play an important  role in the proof of   Theorem~\ref{th1.1}.
The  proof of Theorem~\ref{th1.1}  will be given in Section 3. In Section 4  we discuss some extensions
of Theorem~\ref{th1.1} to measurable
subsets of a space of
homogeneous type and obtain similar results for
operators  on irregular domains with Dirichlet boundary conditions.

\bigskip

\section{ Preliminary results }
\setcounter{equation}{0}

We start by introducing  some notation and assumptions.  Throughout this paper,
unless we mention the contrary, $(X,d,\mu)$ is a metric measure  space where $\mu$
is a Borel measure with respect to the topology defined by the metric $d$.
Next, let
$B(x,r)=\{y\in X,\, {d}(x,y)< r\}$ be  the open ball
with centre $x\in X$ and radius $r>0$. To simplify notation we often just use $B$ instead of $B(x, r)$ and
given $\lambda>0$, we write $\lambda B$ for the $\lambda$-dilated ball
which is the ball with the same centre as $B$ and radius $\lambda r$. Let $B^c$ be the set $X\backslash B$.
We set $V(x,r)=\mu(B(x,r))$ the volume of $B(x,r)$ and we say that $(X, d, \mu)$ satisfies
 the doubling property (see Chapter 3, \cite{CW})
if there  exists a constant $C>0$ such that
\begin{eqnarray}
V(x,2r)\leq C V(x, r)\quad \forall\,r>0,\,x\in X. \label{e1.1}
\end{eqnarray}
If this is the case, there exist  $C, n$ such that for all $\lambda\geq 1$ and $x\in X$
\begin{equation}
V(x, \lambda r)\leq C\lambda^n V(x,r). \label{e1.2}
\end{equation}
In Euclidean space $\RN$ with Lebesgue measure, the parameter $n$ corresponds to
the dimension of the space, but in our more abstract setting, the optimal $n$
 need not even be an integer. There also
exist   $C>0$ and $0\leq D\leq n$ so that
\begin{equation}
V(y,r)\leq C\Big( 1+\frac{d(x,y)}{r}\Big)^D V(x,r)
\label{e1.3}
\end{equation}

\noindent
uniformly for all $x,y\in X$ and $r>0$. Indeed, the property (\ref{e1.2}) with $D=n$ is a direct
consequence of the triangle inequality for the metric
$d$ and the strong homogeneity property (\ref{e1.2}).

%Observe that if $X$ satisfies (\ref{eq2.1}) and has finite measure then it has finite diameter.
% Therefore if $\mu(X)$ is finite, then we may assume that $X=B(x_0, 1)$ for some $x_0\in X$.

  For $1\le p\le+\infty$, we denote the
norm of a function $f\in L^p(X,{\rm d}\mu)$ by $\|f\|_p$, by $\langle \cdot,\cdot \rangle$
the scalar product of $L^2(X, {\rm d}\mu)$, and if $T$ is a bounded linear operator from $
L^p(X, {\rm d}\mu)$ to $L^q(X, {\rm d}\mu)$, $1\le p, \, q\le+\infty$, we write $\|T\|_{p\to q} $ for
the  operator norm of $T$.
Given a  subset $E\subseteq X$, we  denote by  $\chi_E$   the characteristic
function of   $E$.
We denote the dilation of a function $F$ by $\delta_r F(\cdot):=F(r\cdot)$ and
$\widehat{f}\,$ denotes the Fourier  transform, i.e.
of $f$,
$$
\widehat{f}(\xi)={1\over (2\pi)^{n/2}}\int_{\mathbb R^n} f(x)e^{-ix\xi} dx, \ \ \ \ \xi\in \mathbb R^n.
$$
Sometimes we also use  $\widehat{f}$ for ${\mathcal F} f$.

We now   state  the following auxiliary result (see also \cite[Theorem 1]{O}).

\begin{lemma}\label{le2.1}
Assume  that $(X, d, \mu)$ is a space of homogeneous type with a dimension $n$.
 Suppose that $L$
satisfies the Gaussian upper bound \eqref{GE}. Then for $s>n/m$,
 the kernels $K_{(I +tL)^{-s} }(x,y)$ of the operators $ (I +tL)^{-s} $
satisfy
\begin{eqnarray} \label{e2.1}
 \left|K_{(I +tL)^{-s} }(x,y)\right | \leq   a_t(x,y)={ C \over V(x,  t^{1/m})}
  \exp\left(-c \, {  \left({d(x,y)^{m}\over    t}\right)^{1\over 2(m-1)}} \right)
\end{eqnarray}
for all $x,y\in{X}$ and $t>0$.
\end{lemma}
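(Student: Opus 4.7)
My plan is to represent $(I+tL)^{-s}$ via the Gamma subordination formula
\begin{equation*}
(I+tL)^{-s}=\frac{1}{\Gamma(s)}\int_{0}^{\infty}u^{s-1}e^{-u}e^{-utL}\,du,
\end{equation*}
so that passing to kernels and invoking the Gaussian bound $(\mathrm{GE}_m)$ on $H_{ut}(x,y)$ gives
\begin{equation*}
|K_{(I+tL)^{-s}}(x,y)|\le \frac{C}{\Gamma(s)}\int_{0}^{\infty}u^{s-1}e^{-u}\frac{1}{V(x,(ut)^{1/m})}\exp\!\left(-c\left(\frac{d(x,y)^m}{ut}\right)^{\!1/(m-1)}\right)du.
\end{equation*}
Then I would rescale the volume factor by doubling: writing $t^{1/m}=u^{-1/m}(ut)^{1/m}$, the estimate \eqref{e1.2} yields $1/V(x,(ut)^{1/m})\le C\max(u^{-n/m},1)/V(x,t^{1/m})$, so the $V(x,t^{1/m})^{-1}$ factor can be pulled outside.

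It remains to show that, for $A:=d(x,y)^m/t$,
\begin{equation*}
I(A):=\int_{0}^{\infty}u^{s-1}e^{-u}\max(u^{-n/m},1)\exp\!\left(-c(A/u)^{1/(m-1)}\right)du\ \le\ C\exp\!\left(-c'A^{1/(2(m-1))}\right).
\end{equation*}
When $A\le 1$ the exponential on the right is comparable to $1$, and $I(A)$ is uniformly bounded precisely because $s>n/m$ (the hypothesis is used here to make $u^{s-1-n/m}$ integrable near $0$). For $A\ge 1$ I would split the integration at $u_{0}=A^{1/2}$. On $\{u\le A^{1/2}\}$ we have $(A/u)^{1/(m-1)}\ge A^{1/(2(m-1))}$, so the Gaussian factor pulls out as $\exp(-cA^{1/(2(m-1))})$ and the remaining integral is controlled by the same $L^{1}$ argument as above. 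On $\{u\ge A^{1/2}\}$ we use $e^{-u}\le e^{-u/2}e^{-A^{1/2}/2}$; since $m\ge 2$ gives $1/2\ge 1/(2(m-1))$, the second factor is bounded by $\exp(-A^{1/(2(m-1))}/2)$, and the remaining integral $\int u^{s-1}e^{-u/2}\,du$ is finite.

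The main obstacle, such as it is, is choosing the splitting point so that the two competing decays$\,$—$\,$the $e^{-u}$ from the subordination density and the stretched-exponential decay in $A/u$ from $(\mathrm{GE}_m)$$\,$—$\,$balance to give the stated order $A^{1/(2(m-1))}$. The choice $u_{0}=A^{1/2}$ works because in the subexponential regime the relevant inverse power $1/(m-1)$ gets multiplied by the exponent $1/2$ coming from the splitting, producing exactly the exponent $1/(2(m-1))$ in the statement; any smaller $u_{0}$ would improve the Gaussian piece but wreck the $e^{-u}$ piece, and vice versa. A mild secondary point is that $s>n/m$ is needed in an essential way to tame the $u^{-n/m}$ singularity introduced by the volume-rescaling step, so this hypothesis should be invoked explicitly at that moment.
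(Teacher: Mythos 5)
Your proposal is correct and reaches the stated bound, but via a genuinely different mechanism at the key step. Both proofs begin identically: Gamma subordination $(I+tL)^{-s}=\frac{1}{\Gamma(s)}\int_0^\infty u^{s-1}e^{-u}e^{-utL}\,du$, pass to kernels, invoke $(\mathrm{GE}_m)$, and use doubling to trade $V(x,(ut)^{1/m})^{-1}$ for $\max(u^{-n/m},1)/V(x,t^{1/m})$, which is where $s>n/m$ is consumed. The divergence is in how the stretched-exponential decay in $A=d(x,y)^m/t$ is extracted from the remaining $u$-integral. You split the domain at $u_0=A^{1/2}$ (after disposing of $A\le1$), using the Gaussian factor on $\{u\le A^{1/2}\}$ and the subordination density $e^{-u}$ on $\{u\ge A^{1/2}\}$, together with $1/2\ge 1/(2(m-1))$ for $m\ge2$. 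The paper instead uses the pointwise AM--GM inequality
\begin{equation*}
\lambda^{1/(m-1)}+\left(\frac{d(x,y)^m}{\lambda t}\right)^{1/(m-1)}\ \ge\ 2\left(\frac{d(x,y)^m}{t}\right)^{1/(2(m-1))},
\end{equation*}
which factors the Gaussian as $\exp\!\bigl(-c(\tfrac{d(x,y)^m}{\lambda t})^{1/(m-1)}\bigr)\le \exp\!\bigl(-2c(\tfrac{d(x,y)^m}{t})^{1/(2(m-1))}\bigr)\exp\!\bigl(c\lambda^{1/(m-1)}\bigr)$ and pulls the stretched exponential out of the integral in one stroke, leaving $\int_0^\infty e^{-\lambda+c\lambda^{1/(m-1)}}\lambda^{s-1}(1+\lambda^{-1/m})^n\,d\lambda$ (with the constant in $(\mathrm{GE}_m)$ adjusted so the integrand still decays, e.g.\ $c=1/2$). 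Your domain-splitting argument is a bit longer but is arguably more illuminating about why the exponent halves to $1/(2(m-1))$; the paper's AM--GM step is more compact but relies on noticing the algebraic identity $a\cdot b=(d(x,y)^m/t)^{1/(m-1)}$ and on managing the constant so the $e^{c\lambda^{1/(m-1)}}$ factor stays integrable against $e^{-\lambda}$. Both are sound.
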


\begin{proof} Note that
\begin{eqnarray*}
  (I+ tL)^{-s}={1\over  \Gamma(s)} \int\limits_{0}^{\infty}e^{- \lambda tL}e^{-\lambda } \lambda ^{s-1} d\lambda.
\end{eqnarray*}
Hence    the kernel $K_{(I +tL)^{-s} }(x,y)$ of $ (I +tL)^{-s} $ is given by
\begin{eqnarray*}
\left|K_{(I +tL)^{-s} }(x,y)\right| &\leq& C \int\limits_{0}^{\infty}
 {e^{-\lambda} \lambda^{s-1}\over V(x,(\lambda t)^{1/m})} \exp\left(-c \, {  \left({d(x,y)^{m}\over    \lambda t}\right)^{1\over m-1}}\right)
 d\lambda .
\end{eqnarray*}
We use  the doubling condition \eqref{e1.2} to obtain that
 $
 V(x, t^{1/m}) \leq C V(x,  (\lambda t)^{1/m}) \big(1+ \lambda^{-1/m}\big)^n
 $ for every $x\in X, \lambda, t>0.$
This, in combination with  the fact  that
$$
\lambda^{1/(m-1)} + \left({d(x,y)^{m}\over    \lambda t}\right)^{1\over m-1}\geq
 2 \left({d(x,y)^{m}\over    t}\right)^{1\over 2(m-1)},
 $$
 gives
 \begin{eqnarray*}
\left|K_{(I +tL)^{-s} }(x,y)\right| &\leq& C  \exp\left(-c_1 \, {  \left({d(x,y)^{m}\over    t}\right)^{1\over 2(m-1)}} \right)
\int\limits_{0}^{\infty}
 {e^{-\lambda + 2^{-1}\lambda^{1/(m-1)}} \lambda^{s-1}\over V(x,(\lambda t)^{1/m})}
 d\lambda\\
 &\leq& { C \over V(x,  t^{1/m})}  \exp\left(-c_1\, {  \left({d(x,y)^{m}\over    t}\right)^{1\over 2(m-1)}} \right)
\int\limits_{0}^{\infty}
  e^{-\lambda + 2^{-1}\lambda^{1/(m-1)} } \lambda^{s-1} \big(1+ \lambda^{-1/m}\big)^{n}
 d\lambda.
\end{eqnarray*}
A simple calculation shows
\begin{eqnarray}\label{e2.2}
\int\limits_{0}^{\infty}
  e^{-\lambda + 2^{-1}\lambda^{1/(m-1)}} \lambda^{s-1} \big(1+ \lambda^{-1/m}\big)^{n}
 d\lambda&\leq& C\int\limits_{0}^{1}
   \lambda^{s-1}  \lambda^{-n/m} d\lambda
  + C\int\limits_{1}^{\infty}
  e^{-\lambda/2} \lambda^{s-1}
 d\lambda,
\end{eqnarray}
and the above  integral \eqref{e2.2} is finite for $m\geq 2$ and $s>n/m$. This shows the desired result \eqref{e2.1}.
\end{proof}

Following \cite[Proposition 2.5]{DR},  we say that a  non-negative function $G_t(x,y)$   satisfies
a Harnack-type inequality  if   for each $\mu>0$ there are $C, \theta\geq 1$ such that
\begin{eqnarray}\label{e2.3}
\sup_{z\in B(y, r)} G_t(x, z)\leq C \inf_{z\in B(y, r)} G_{\theta t}(x, z)
\end{eqnarray}
uniformly for $x, y\in X$ and $r, t>0$ with $r^m\leq \mu t.$
Examples of functions satisfying    \eqref{e2.3} include
  functions $h_t(x,y)$ in \eqref{GE} and   $a_t(x,y)$ defined by  \eqref{e2.1} (see for example,  \cite[Lemma 1]{DM}).

In our Theorem~\ref{th1.1}, we will need some kernel estimates of the operator $(I +tL)^{-s}$ for $s=n/2$.
Since this integral \eqref{e2.2} is infinite for  $m=2$ and $s=n/2$,
 estimate \eqref{e2.3} may or may not hold for  kernel of the operator $(I +tL)^{-n/2}$. Instead,  we have the following
observation,  which will  play a key role in the proof of Theorem~\ref{th1.1} in Section 3.

\begin{lemma}\label{le2.2} Assume  that $(X, d, \mu)$ is a space of homogeneous type with a dimension $n$.
 Suppose that $L$
satisfies the Gaussian upper bound \eqref{GE}. Then
 the kernels $K_{(I +tL)^{-n/2} }(x,y)$ of the operators $ (I +tL)^{-n/2} $
satisfy
\begin{eqnarray*}
 \left|K_{(I +tL)^{-n/2} }(x,y)\right | \leq   P_t(x,y)
\end{eqnarray*}
for all $x,y\in{X}$ and $t>0$ where $ P_t(x,y)$  is a function satisfying
\begin{itemize}
\item[(i)] there exists a constant $C>0$ such that  for $y\in X$ and $t>0$,
$$
 \sup_y\int_X |P_t(x,y)| \, d\mu(x)\leq C, \ \ \ {\rm and}\ \ \ \int_X |P_t(x,y)|^2 \, d\mu(x)\leq {C\over V(y, t^{1/m}) };
 $$

 \item[(ii)] There is a  constant $C>0$ such that
\begin{eqnarray} \label{e2.4}
\sup_{y:\ R\leq d(x,y)\leq 2R} P_t(x,y)\leq C\inf_{y:\ R\leq d(x,y)\leq 2R} P_t(x,y)
\end{eqnarray}
uniformly for $x\in X$ and $R, t>0$.
\end{itemize}
\end{lemma}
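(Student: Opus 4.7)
The plan is to start from the subordination identity
\[(I+tL)^{-n/2} = \frac{1}{\Gamma(n/2)} \int_0^\infty e^{-\lambda} \lambda^{n/2-1} e^{-\lambda tL}\, d\lambda,\]
apply the Gaussian bound $({\rm GE}_m)$ to derive a pointwise estimate on the kernel, and then overestimate that bound by a function $P_t$ with polynomial tail so that property (ii) holds on every dyadic annulus.

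When $m > 2$ the exponent $s = n/2$ strictly exceeds $n/m$ and Lemma~\ref{le2.1} applies directly, yielding a Gaussian-type bound on the kernel. The real content is therefore the borderline case $m = 2$, in which $s = n/2 = n/m$ and the proof of Lemma~\ref{le2.1} breaks down because the residual $\lambda$-integral is $\int_0^1 \lambda^{-1}\,d\lambda = \infty$. I would still split the $\lambda$-integral at $\lambda = 1$; on $[1, \infty)$ I would use the AM--GM inequality $\lambda^{1/(m-1)} + (d(x,y)^m/(\lambda t))^{1/(m-1)} \geq 2(d(x,y)^m/t)^{1/(2(m-1))}$ exactly as in Lemma~\ref{le2.1}. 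On $(0,1]$ I would not apply AM--GM; instead I would retain the full Gaussian factor in $d(x,y)^m/(\lambda t)$ and perform the change of variables $u = d(x,y)^m/(\lambda t)$, which converts the offending $\lambda^{-1}$ integrand into $u^{-1} e^{-cu}$ over $u \in [d(x,y)^m/t, \infty)$; integrating this yields a logarithmic contribution $1 + \log_+(t^{1/m}/d(x,y))$ when $d(x,y) \leq t^{1/m}$ and pure Gaussian decay otherwise. Combining the two pieces gives the pointwise bound
\[|K_{(I+tL)^{-n/2}}(x,y)| \leq \frac{C}{V(x, t^{1/m})}\bigl(1+\log_+(t^{1/m}/d(x,y))\bigr)\exp\bigl(-c(d(x,y)^m/t)^{1/(2(m-1))}\bigr).\]

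Since a genuine Gaussian tail does not satisfy the annular Harnack condition (ii)---its $\sup/\inf$ on $\{y:R \leq d(x,y) \leq 2R\}$ blows up as $R \to \infty$---I would enlarge this bound by replacing the exponential with a polynomial tail. Using the elementary fact $\exp(-c r^{1/(2(m-1))}) \leq C_N (1+r^{1/m})^{-N}$ for any $N$, define
\[P_t(x,y) = \frac{C}{V(x,t^{1/m})}\bigl(1+\log_+(t^{1/m}/d(x,y))\bigr)\bigl(1+d(x,y)/t^{1/m}\bigr)^{-N}\]
with $N$ chosen larger than $n + D$, where $D$ is the constant in \eqref{e1.3}. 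Then $|K_{(I+tL)^{-n/2}}| \leq P_t$, and property (ii) is immediate: on $\{y : R \leq d(x,y) \leq 2R\}$ the factor $V(x,t^{1/m})^{-1}$ is independent of $y$, the log-factor varies by a bounded multiplicative constant, and the polynomial factor varies by at most $2^N$.

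For property (i) I would split the $x$-integration into the central ball $B(y, t^{1/m})$ and dyadic annuli $\{2^k t^{1/m} \leq d(x,y) < 2^{k+1} t^{1/m}\}$ for $k \geq 0$. On the $k$th annulus, \eqref{e1.2} together with \eqref{e1.3} yield $V(y, 2^{k+1}t^{1/m}) \leq C 2^{k(n+D)} V(x, t^{1/m})$, so the annular contribution to $\int P_t\,d\mu(x)$ is bounded by $C 2^{k(n+D-N)}$, which is summable once $N > n+D$. On $B(y, t^{1/m})$ doubling gives $V(x,t^{1/m}) \sim V(y,t^{1/m})$, and the log-integral is controlled by $C V(y, t^{1/m})$ via a further dyadic decomposition inside $B(y, t^{1/m})$; the $L^2$ bound follows along the same lines. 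The principal obstacle is the borderline case $m = 2$: it both forces the logarithmic factor in $P_t$ (via the change-of-variables computation on $\lambda \in (0,1]$) and requires care in verifying the $L^1$ and $L^2$ norms near $d(x,y) = 0$, which ultimately rests on the doubling-with-dimension-$n$ assumption controlling the volume of small balls around $y$.
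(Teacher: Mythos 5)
Your proof starts from the same subordination identity as the paper and correctly carries out the $\lambda$-integral in the borderline case $m=2$: the change of variables on $(0,1]$ indeed produces the logarithmic factor $1+\log_+(t^{1/m}/d(x,y))$, and the observation that the Gaussian tail must be enlarged to a polynomial one in order for the annular Harnack condition (ii) to hold is exactly right. Property (ii) for your explicit $P_t$ is then immediate, as you say. So the strategy is sound and more explicit than the paper's, which never writes down the log at all: the paper first replaces $h_t$ by the polynomial-decay majorant $p_t(x,y)=CV(y,t^{1/m})^{-1}\bigl(1+d(x,y)^m/t\bigr)^{-(n+1)/m}$, sets $P_t(x,y)=C\int_0^\infty p_{\lambda t}(x,y)e^{-\lambda}\lambda^{n/2-1}\,d\lambda$, and proves (i) by Fubini using $\sup_y\int_X p_s(x,y)\,d\mu(x)\le C$ uniformly in $s$; the divergence that you track explicitly stays hidden inside the subordination integral.

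There is, however, a genuine gap in your verification of (i). Your explicit majorant has the form $P_t(x,y)=CV(x,t^{1/m})^{-1}\bigl(1+\log_+(t^{1/m}/d(x,y))\bigr)\bigl(1+d(x,y)/t^{1/m}\bigr)^{-N}$, which blows up at $x=y$, and your control of the log-integral over $B(y,t^{1/m})$ rests on the claim that the dyadic shell masses $\mu(A_j)$, $A_j=\{2^{-(j+1)}t^{1/m}\le d(x,y)<2^{-j}t^{1/m}\}$, make $\sum_j(1+j)\mu(A_j)$ summable. Doubling gives an \emph{upper} bound on volume growth, not a polynomial upper bound on volume decay near $y$; by Abel summation $\sum_j(1+j)\mu(A_j)=V(y,t^{1/m})+\sum_{j\ge1}V(y,2^{-j}t^{1/m})-\lim_N(N+1)V(y,2^{-(N+1)}t^{1/m})$, and the middle sum can diverge for doubling measures (for example $V(y,2^{-j}t^{1/m})\sim V(y,t^{1/m})/(j+1)$, or, more extremely, any space with an atom at $y$, where $P_t(y,y)=\infty$ on a set of positive measure and $\int_X P_t(x,y)\,d\mu(x)=\infty$). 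The step $V(x,(\lambda t)^{1/m})^{-1}\le C\lambda^{-n/m}V(x,t^{1/m})^{-1}$ is where the loss occurs: it discards the fact that $V(x,r)$ may not shrink to zero as $r\to0$. The paper's $P_t$ keeps $V(y,(\lambda t)^{1/m})$ inside the integral, so it automatically adapts to slow volume decay or atoms, and (i) follows with no extra hypotheses. To repair your argument in full generality you would have to either retain the volume factor inside the subordination integral (which essentially reproduces the paper's $P_t$) or impose an additional reverse-doubling/nonatomic hypothesis not present in the lemma.
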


\begin{proof} Note that for every $t>0,$
\begin{eqnarray*}
  (I+ tL)^{-n/2}={1\over  \Gamma(n/2)} \int\limits_{0}^{\infty}e^{- \lambda tL}e^{-\lambda } \lambda ^{n/2-1} d\lambda.
\end{eqnarray*}
This, together with the fact \eqref{e1.3} that
\begin{eqnarray*}
 |H_t(x,y)| &\leq& h_t(x,y)={C\over V(x,t^{1/m})} \exp\left(-c \, {  \left({d(x,y)^{m}\over    t}\right)^{1\over m-1}}\right)\\
&\leq& {C\over V(y,t^{1/m})} \left( 1+ {d(x,y)\over t^{1/m} }\right)^n \exp\left(-c \, {  \left({d(x,y)^{m}\over    t}\right)^{1\over m-1}}\right)\\
&\leq& {C\over V(y,t^{1/m})}
 \left( 1 + {d(x,y)^{m}\over    t}\right)^{-(n+1)/m}=:  p_t(x,y),
\end{eqnarray*}
yields that   the kernel $K_{(I +tL)^{-n/2} }(x,y)$ of $ (I +tL)^{-n/2} $ is given by
\begin{eqnarray*}
\left|K_{(I +tL)^{-n/2} }(x,y)\right| &\leq& C \int\limits_{0}^{\infty}p_{\lambda t}(x,y)
e^{-\lambda} \lambda^{n/2-1} d\lambda  =:P_t(x,y)
\end{eqnarray*}
for all $x,y\in{X}$  and $t>0$.
A simple calculation shows that for all $y\in X,$
\begin{eqnarray*}\label{e4.4}
 \int_X |P_t(x,y)| d\mu(x)&\leq & C\int\limits_{0}^{\infty}e^{-\lambda} \lambda^{n/2-1} \left(
 \int_X
 p_{\lambda t}(x,y) d\mu(x) \right)d\lambda
 \leq   C\int\limits_{0}^{\infty}e^{-\lambda} \lambda^{n/2-1}  d\lambda  \leq C.
\end{eqnarray*}
Now we use  the doubling condition \eqref{e1.2} to obtain that
 $
 V(y, t^{1/m}) \leq C V(y,  (\lambda t)^{1/m}) \big(1+ \lambda^{-1/m}\big)^n
 $ for every $y\in X, \lambda, t>0$. By   Minkowski's  inequality,
\begin{eqnarray*}
 \left(\int_X |P_t(x,y)|^2 d\mu(x)\right)^{1/2}
 &\leq& C
\int\limits_{0}^{\infty}e^{-\lambda} \lambda^{n/2-1}\|p_{\lambda t}(\cdot, y)\|_{L^2(X)}  d\lambda
\\
 &\leq&  C\int\limits_{0}^{\infty}e^{-\lambda} \lambda^{n/2-1}    V(y,(\lambda t)^{1/m})^{-1/2}  d\lambda \\
 &\leq&  C  V(y, t^{1/m})^{-1/2}
\int\limits_{0}^{\infty}\big(1+ \lambda^{-1/m}\big)^{n/2} e^{-\lambda} \lambda^{n/2-1}  d\lambda
  \leq    C  V(y, t^{1/m})^{-1/2}.
\end{eqnarray*}

To show  (ii), we observe that  for all $x\in X,$  and  $y_1, y_2\in B(x, 2R)\backslash B(x, R), R>0$, one has
$
d(x, y_1)\leq d(x, y_2)+d(y_1, y_2)\leq d(x, y_2) +4R\leq   5 d(x, y_2).
$
This implies
\begin{eqnarray*}
P_t(x,y_2) &=& \int\limits_{0}^{\infty} p_{\lambda t}(x,y_2)
e^{-\lambda} \lambda^{n/2-1} d\lambda  \\
 &\leq&  C5^{ n-1}  \int\limits_{0}^{\infty} p_{\lambda t}(x,y_1)
e^{-\lambda} \lambda^{n/2-1} d\lambda
 =    5^{n-1} P_t(x,y_1).
\end{eqnarray*}
Hence, the desired estimate is valid with $C=5^{n-1}$ uniformly for $x\in X  $ and $R, t>0$.
 \end{proof}

\begin{lemma}\label{le2.3}
Assume  that $(X, d, \mu)$ is a space of homogeneous type with a dimension $n$.
 Suppose that $L$
satisfies the property \eqref{GE}.
Let $\varphi_1$ be a smooth function with supp $\varphi_1\subset [1/2,\infty]$ and $\varphi_1(\lambda)=1$ on $[1,\infty]$. For every
 $c_0\leq 1$ and    $k\in{\mathbb N}$,
   the kernels $K_{(I +L)^{-n/2}(I-e^{-2^{mk}L}) \varphi_1(c_0 L)}(x,y)$ of the operators $ (I +L)^{-n/2}(I-e^{-2^{mk}L})\varphi_1(c_0 L) $
satisfy
\begin{eqnarray*}
 \left|K_{(I +L)^{-n/2}(I-e^{-2^{mk}L}) \varphi_1(c_0 L)}(x,y)\right | \leq    Q (x,y)
\end{eqnarray*}
for all $x,y\in{X}$ and $t>0$ where $ Q (x,y)$  is a function satisfying

\begin{itemize}
\item[(i)] there exists a constant $C>0$ such that
\begin{eqnarray*}
 \sup_x\int_X  Q (x,y) d\mu(y)\leq C
\end{eqnarray*}
uniformly for $c_0\leq 1$;

 \item[(ii)] There is a  constant $C>0$ such that
\begin{eqnarray*}
\sup_{y:\ R\leq d(x,y)\leq 2R}  Q (x,y)\leq C\inf_{y:\ R\leq d(x,y)\leq 2R}  Q (x,y)
\end{eqnarray*}
uniformly for $x\in X$, $R>0$ and $c_0\leq 1$.
\end{itemize}
\end{lemma}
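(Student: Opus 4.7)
My plan is to parallel the proof of Lemma \ref{le2.2}, constructing $Q(x,y)$ as an integral of polynomial-decay Gaussian-type majorants, and exploiting the splitting of $\varphi_1(c_0 L)$ into the identity minus a smooth, compactly supported spectral multiplier of $L$. First I would record the following variant of the polynomial bound used in Lemma \ref{le2.2}: by combining \eqref{GE} with \eqref{e1.3}, for every $t > 0$,
\[
|H_t(x,y)| \leq \tilde p_t(x,y) := \frac{C}{V(x, t^{1/m})}\Bigl(1 + \frac{d(x,y)}{t^{1/m}}\Bigr)^{-(n+1)},
\]
and a ring decomposition using the doubling property \eqref{e1.2} yields $\sup_x \int_X \tilde p_t(x,y)\, d\mu(y) \leq C$ uniformly in $t > 0$ (the direction of integration dual to the one needed in Lemma \ref{le2.2}). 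I would then insert the subordination identity $(I+L)^{-n/2} = \Gamma(n/2)^{-1}\int_0^\infty e^{-\lambda L}e^{-\lambda}\lambda^{n/2-1}\, d\lambda$ to write
\[
(I+L)^{-n/2}(I - e^{-2^{mk}L})\varphi_1(c_0 L) = \frac{1}{\Gamma(n/2)}\int_0^\infty \bigl[e^{-\lambda L} - e^{-(\lambda + 2^{mk})L}\bigr]\varphi_1(c_0 L)\, e^{-\lambda}\lambda^{n/2-1}\, d\lambda.
\]

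To control the integrand uniformly in $c_0 \leq 1$ and $k$, I would decompose $\varphi_1(c_0 L) = I - \eta(c_0 L)$ with $\eta := 1 - \varphi_1$ smooth and supported in $[0,1]$. The identity contribution yields the kernel $H_\lambda(x,y) - H_{\lambda + 2^{mk}}(x,y)$, whose modulus is bounded by $\tilde p_\lambda(x,y) + \tilde p_{\lambda + 2^{mk}}(x,y)$. For the $\eta$-contribution, I would invoke the classical off-diagonal kernel estimate for smooth compactly supported spectral multipliers under Gaussian heat kernel bounds (see, e.g., \cite{DOS}): for every $N$, with $C_N$ independent of $c_0 \leq 1$,
\[
|K_{\eta(c_0 L)}(x,y)| \leq \frac{C_N}{V(x, c_0^{1/m})}\Bigl(1 + \frac{d(x,y)}{c_0^{1/m}}\Bigr)^{-N}.
\]
Composing with $e^{-\lambda L}$ and $e^{-(\lambda + 2^{mk})L}$ via the kernel composition formula and using \eqref{GE} would then produce a pointwise majorant of the form $C[\tilde p_\lambda(x,y) + \tilde p_{\lambda + 2^{mk}}(x,y)]$, uniformly in $c_0 \leq 1$ and $k$.

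Setting $Q(x,y) := C\int_0^\infty [\tilde p_\lambda(x,y) + \tilde p_{\lambda + 2^{mk}}(x,y)]\, e^{-\lambda}\lambda^{n/2-1}\, d\lambda$ would then provide the required majorant. Property (i) would follow by Fubini together with $\sup_x \int_X \tilde p_s(x,y)\, d\mu(y) \leq C$ uniformly in $s > 0$, reducing the $\lambda$-integral to a constant multiple of $\Gamma(n/2)$. Property (ii) would follow exactly as in Lemma \ref{le2.2}: for $y_1, y_2$ with $R \leq d(x, y_i) \leq 2R$ the triangle inequality gives $d(x, y_1) \leq 5\, d(x, y_2)$ and vice versa, and since $\tilde p_s(x, y)$ depends on $y$ only through $d(x,y)$, the bound $\tilde p_s(x, y_1) \leq 5^{n+1}\tilde p_s(x, y_2)$ holds with constant uniform in $s$, which is inherited by $Q$ upon integration in $\lambda$. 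The main obstacle is the uniform (in $c_0 \leq 1$) off-diagonal estimate for $\eta(c_0 L)$ and its composition with the heat semigroup: one must verify the smooth spectral multiplier bound under \eqref{GE} and carefully track the $c_0$-dependence so that the cumulative constant in $Q(x,y)$ remains independent of both $c_0$ and $k$.
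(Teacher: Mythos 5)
Your overall strategy (build a pointwise majorant $Q$ by subordination and then verify the $L^1$ and Harnack properties) is genuinely different from the paper's, which instead performs a dyadic Littlewood--Paley decomposition $\sum_{j\geq -1}\phi(2^{-j}L)$ of the spectral multiplier and bounds each piece via the weighted $L^2$ estimates of Lemma~\ref{le2.4}, using the $(1+2^j)^{-n/2}$ decay from $(I+L)^{-n/2}$ to sum. The two approaches would be comparable in spirit, except that your key composition step contains a genuine gap.

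Specifically, the claim that the kernel of $[e^{-\lambda L}-e^{-(\lambda+2^{mk})L}]\eta(c_0L)$ is pointwise dominated by $C[\tilde p_\lambda(x,y)+\tilde p_{\lambda+2^{mk}}(x,y)]$ \emph{uniformly in $c_0\leq 1$} is false. The kernel of $\eta(c_0 L)$ is spread at scale $c_0^{1/m}$, so composing with $e^{-\lambda L}$ gives a kernel concentrated at scale $\max(\lambda,c_0)^{1/m}$, not $\lambda^{1/m}$. When $\lambda<c_0$ this is \emph{broader} than $\tilde p_\lambda$, and at distances $d(x,y)\gtrsim c_0^{1/m}$ the composite kernel genuinely exceeds $\tilde p_\lambda(x,y)$. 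To see it concretely take $L=\Delta$ on $\mathbb R^n$, $c_0=1$, $d(x,y)=1$, and $\lambda\to 0$: the kernel of $e^{-\lambda\Delta}\eta(\Delta)$ at distance $1$ tends to $K_{\eta(\Delta)}(x,y)\sim 1$, while $\tilde p_\lambda(x,y)\sim\lambda^{1/m}\to 0$. So the majorant you propose does not actually dominate the kernel, and hence neither property (i) nor (ii) of the lemma can be read off from it. The fix would be to replace $\tilde p_\lambda$ by $\tilde p_{\lambda+c_0}$ (or $\tilde p_{\max(\lambda,1)}$, exploiting $c_0\leq 1$) in the part of $Q$ coming from $\eta(c_0 L)$; the resulting modified $Q$ still has uniformly bounded row integrals and satisfies the Harnack-type inequality in the annulus, but you would need to re-derive these properties rather than appeal to the ones you stated. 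By contrast, the paper's dyadic decomposition automatically places each spectral block at the correct length scale $2^{-j/m}$, with the factor $(1+2^j)^{-n/2}$ supplying summability, so no scale mismatch can occur.
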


To prove Lemma~\ref{le2.3}, we recall that  in    \cite[Proposition 4.1]{CCO},
Carron, Coulhon and Ouhabaz used some techniques introduced by Davies (\cite{D2})  to show that the upper
Gaussian estimate \eqref{GE}  on $e^{-tL}, t>0, $ extends to a similar estimate on $e^{-zL}$ where
$z$ belongs to  the whole complex right half-plane  $\mathbb{C}_+=\{z\in{\mathbb C}:  {{\rm Re }z}>0 \}$
\begin{eqnarray*}
%\label{eppp}
|H_z(x,y)|\leq {C(\cos\theta)^{-n}\over \left(V(x, ({|z|\over (\cos \theta)^{m-1} })^{1/m} )\, V(y, ({|z|\over (\cos \theta)^{m-1} })^{1/m} )\right)^{1/2}}
 \exp\left(-c \, {  \left({d(x,y)^{m}\over    |z|}\right)^{1\over m-1}}\cos\theta \right)
 \end{eqnarray*}
 for and all $x, y\in X$, where $\theta={\rm arg} \, z$. The pointwise bounds have the following integrated form:
  for any $s\geq 0 $, $R>0$ and $\tau\in {\mathbb R}$,
the kernels $K_{e^{-(1+i\tau) R^{-m}L}} (x,y) $ of the operators
${e^{-(1+i\tau) R^{-m}L}} $ satisfy
\begin{eqnarray}\label{e2.5}
\int_{X} |K_{e^{-(1+i\tau) R^{-m}L}} (x,y) |^2 d(x,y)^s d\mu(x)\leq V(y, 1/R)^{-1} R^{-s} (1+|\tau|)^s.
 \end{eqnarray}
 For a proof, see \cite[Lemma 4.1]{DOS}.

 \medskip

Next  we apply  the estimate  \eqref{e2.5} to obtain   the following result. For its proof, we refer to \cite[Lemma 4.3]{DOS} or \cite[Theorem 7.15]{O}.

 \begin{lemma}\label{le2.4} Suppose  that $(X, d, \mu)$ is  a  space of homogeneous type  with a dimension $n$.  Suppose that $L$
satisfies the property \eqref{GE}.
For any $R>0, s>0$ and $\epsilon>0$, there exists a constant
$C=C(s, \epsilon)$ such that
\begin{eqnarray*}
\int_X \big|K_{F(\sqrt[m]{L})}(x,y)\big|^2 \big(1+Rd(x,y)\big)^{s} d\mu(x)\leq
{C \over V(y, R^{-1})}
 \|\delta_{R} F\|^2_{C^{{s\over 2} +\epsilon}}
\end{eqnarray*}
for all Borel functions $F$ such that supp $F\subseteq [R/4, R].$
\end{lemma}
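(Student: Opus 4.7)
\textbf{Proof plan for Lemma~\ref{le2.4}.} The idea is to represent $F(\sqrt[m]{L})$ as an integral, over imaginary time $\tau$, of complex-time heat operators $e^{-(1-i\tau)R^{-m}L}$, so that the hypothesis \eqref{e2.5} can be applied $\tau$-wise.

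\emph{Step 1: Factorization.} Since $\operatorname{supp}F\subseteq[R/4,R]$, define $G(\mu)=F(R\mu^{1/m})e^{\mu}$ for $\mu>0$, which is supported in $[4^{-m},1]$. Evaluating at $\mu=\lambda/R^{m}$ for $\lambda\in\operatorname{spec}(L)$ gives $G(\lambda/R^{m})=F(\lambda^{1/m})e^{\lambda/R^{m}}$, so by the spectral theorem
$$ F(\sqrt[m]{L})=G(R^{-m}L)\,e^{-R^{-m}L}.$$
The factor $e^{\mu}$ is bounded on the compact support of $G$, so this merely shifts at most a constant.

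\emph{Step 2: Fourier inversion.} Since $G$ is compactly supported and sufficiently regular, Fourier inversion and the bounded Borel calculus yield
$$ G(R^{-m}L)=\frac{1}{2\pi}\int_{\mathbb{R}}\hat{G}(\tau)\,e^{i\tau R^{-m}L}\,d\tau,$$
whence
$$ F(\sqrt[m]{L})=\frac{1}{2\pi}\int_{\mathbb{R}}\hat{G}(\tau)\,e^{-(1-i\tau)R^{-m}L}\,d\tau.$$
Passing to kernels:
$$ K_{F(\sqrt[m]{L})}(x,y)=\frac{1}{2\pi}\int_{\mathbb{R}}\hat{G}(\tau)\,K_{e^{-(1-i\tau)R^{-m}L}}(x,y)\,d\tau.$$

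\emph{Step 3: Weighted $L^{2}$ estimate via Minkowski and \eqref{e2.5}.} Apply Minkowski's inequality in the $L^{2}(X,(1+Rd(x,y))^{s}d\mu)$ norm, pulling the $\tau$-integral outside. For each fixed $\tau$, bound $(1+Rd(x,y))^{s}\le C_{s}(1+(Rd(x,y))^{s})$ and apply \eqref{e2.5} at exponents $0$ and $s$ (and multiply by $R^{s}$) to obtain
$$ \int_{X}|K_{e^{-(1-i\tau)R^{-m}L}}(x,y)|^{2}(1+Rd(x,y))^{s}\,d\mu(x)\le\frac{C}{V(y,R^{-1})}(1+|\tau|)^{s}.$$
Consequently
$$ \left(\int_{X}|K_{F(\sqrt[m]{L})}(x,y)|^{2}(1+Rd(x,y))^{s}\,d\mu(x)\right)^{1/2}\le\frac{C}{V(y,R^{-1})^{1/2}}\int_{\mathbb{R}}|\hat{G}(\tau)|(1+|\tau|)^{s/2}\,d\tau.$$

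\emph{Step 4: From the Fourier integral to the norm of $\delta_{R}F$.} Split
$|\hat{G}(\tau)|(1+|\tau|)^{s/2}=\bigl[|\hat{G}(\tau)|(1+|\tau|)^{s/2+\epsilon}\bigr]\cdot(1+|\tau|)^{-\epsilon}$ and use Cauchy--Schwarz, together with a Littlewood--Paley dyadic decomposition in $\tau$ to absorb the factor $(1+|\tau|)^{-\epsilon}$ using the compact support of $G$; this bounds the integral by $C_{\epsilon}\|G\|_{C^{s/2+\epsilon}}$. Finally, since $G(\mu)=\delta_{R}F(\mu^{1/m})\,e^{\mu}$ and both $\mu\mapsto\mu^{1/m}$ and $\mu\mapsto e^{\mu}$ are smooth on the compact interval $[4^{-m},1]$, composition and multiplication yield $\|G\|_{C^{s/2+\epsilon}}\le C\,\|\delta_{R}F\|_{C^{s/2+\epsilon}}$. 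Squaring gives the desired inequality.

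\emph{Main obstacle.} The delicate step is Step~4: passing from the weighted $L^{1}$ integral of $\hat{G}$ to the $C^{s/2+\epsilon}$ norm of $\delta_{R}F$ for \emph{arbitrary} $\epsilon>0$. A naive Cauchy--Schwarz with weight $(1+|\tau|)^{-2\epsilon}$ requires $\epsilon>1/2$; to get the sharp $\epsilon>0$ one must exploit the compact support of $G$, either by a dyadic decomposition of $\hat{G}$ combined with Bernstein-type estimates, or by reducing to the Plancherel form of the Sobolev norm on a bounded interval. Apart from this, the proof is a direct assembly of Fourier inversion, Minkowski, and the pointwise estimate \eqref{e2.5}.
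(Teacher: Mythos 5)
Your Steps 1--3 are exactly the route the paper has in mind: the paper gives no proof of Lemma~\ref{le2.4} but refers to \cite[Lemma 4.3]{DOS} and \cite[Theorem 7.15]{O}, and the factorization $F(\sqrt[m]{L})=G(R^{-m}L)e^{-R^{-m}L}$ with $G(\mu)=(\delta_RF)(\mu^{1/m})e^{\mu}$, Fourier inversion in $\tau$, Minkowski, and the complex-time weighted bound \eqref{e2.5} is precisely the mechanism the authors themselves use later in the proof of Lemma~\ref{le3.2}. Up to the point where you arrive at $V(y,R^{-1})^{-1/2}\int_{\mathbb R}|\widehat{G}(\tau)|(1+|\tau|)^{s/2}\,d\tau$, your argument is correct.

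The gap is Step 4: the inequality $\int_{\mathbb R}|\widehat{G}(\tau)|(1+|\tau|)^{s/2}\,d\tau\le C_\epsilon\|G\|_{C^{s/2+\epsilon}}$ for \emph{arbitrary} $\epsilon>0$ is false, and neither compact support, nor a dyadic decomposition with Bernstein, nor ``Plancherel on a bounded interval'' can rescue it. The block estimate $\int_{|\tau|\sim 2^j}|\widehat G|\le 2^{j/2}\big(\int_{|\tau|\sim 2^j}|\widehat G|^2\big)^{1/2}\lesssim 2^{j/2}2^{-j\beta}\|G\|_{C^\beta}$ forces $\beta>s/2+1/2$, so your route only yields the lemma with $C^{s/2+1/2+\epsilon}$ in place of $C^{s/2+\epsilon}$; and this half-derivative loss is intrinsic to the $L^1_\tau$ reduction: for the chirps $G_N(\lambda)=\chi(\lambda)e^{iN\lambda^2}$ (smooth bump $\chi$) one has $\int|\widehat{G_N}(\tau)|(1+|\tau|)^{s/2}\,d\tau\approx N^{s/2+1/2}$ while $\|G_N\|_{C^{s/2+\epsilon}}\approx N^{s/2+\epsilon}$, so the Step 4 inequality fails whenever $\epsilon<1/2$. (Incidentally, the lossy version you do prove would suffice for the only application of Lemma~\ref{le2.4} in this paper, namely the proof of Lemma~\ref{le2.3}, where the weight is $3n+2D+3$ and the norm used is $C^{2n+D+2}$; but it does not prove the lemma as stated.) To obtain the clean exponent $s/2+\epsilon$ one must avoid converting to $\int|\widehat G|(1+|\tau|)^{s/2}d\tau$ at the given weight --- for instance by interpolating between the unweighted estimate $\int_X|K_{F(\sqrt[m]{L})}(x,y)|^2d\mu(x)\le CV(y,R^{-1})^{-1}\|\delta_RF\|_\infty^2$ (which costs no smoothness) and a weighted estimate at a much larger exponent $\sigma$, so that the fixed $1/2$-derivative overhead is diluted by the factor $s/\sigma$ --- or simply by following the cited proofs in \cite{DOS} and \cite{O}.
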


 \medskip

\begin{proof}[Proof of Lemma~\ref{le2.3}]
Let
$\phi$ be a non-negative $C_c^{\infty}$ function on $\mathbb R$ such
that ${\rm supp}\  \phi \subseteq ({1/4}, 1)$ and
\begin{eqnarray}\label{e2.6}
\sum_{\ell\in {\Bbb Z}}\phi(2^{-\ell}\lambda)=1, \ \ \ \ \ \forall   \lambda>0,
\end{eqnarray}
and let
$\phi_\ell(\lambda)$ denote the function $\phi(2^{-\ell}\lambda)$.
Since $c_0\leq 1,$ one has
$$
(I +L)^{-n/2}(I-e^{-2^{mk}L})\varphi_1(c_0L) =\sum_{j=-1}^{\infty}(I +L)^{-n/2}(I-e^{-2^{mk}L})\varphi_1(c_0L)\phi(2^{-j}L)=:
\sum_{j=-1}^{\infty} F_j(L).
$$
For every $j\in{\mathbb Z}$, we  write   $G_j (\lambda)=F_j(\lambda)e^{2^{-j}\lambda}$. Then we have
 $F_j(L)=G_j(L)e^{-2^{-j}L} $
 and so
 $$
 K_{F_j(L)}(x,y)
=  \int_{X}
K_{G_j(L) }(x,z) K_{ e^{-2^{-j}L}}(z,y) d\mu(z).
 $$
Note that
 $$
 \big(1+{2^{j/m}d(x,y)} \big) \leq  (1+{2^{j/m}d(x,z)} )(1+{2^{j/m}d(z,y)} )
 $$
and from   \eqref{e1.3},
$$
 V(x, 2^{-j/m}) \leq C V(z,  2^{-j/m}) \left(1+   2^{j/m}d(x, z)  \right)^D.
$$
We apply the heat kernel estimate for $e^{-2^{-j}L}$ and two estimates above to obtain
 that
\begin{eqnarray}\label{e2kernel}
 \left| K_{F_j(L)}(x,y)\right|
&\leq& \int_{X}
\big|K_{G_j(L) }(x,z)\big| \frac{1}{V(z, 2^{-j/m})}
\left(1+   2^{j/m}d(z, y)  \right)^{n+1} d\mu(z)\nonumber\\
&\leq&  C V(x, 2^{-j/m})^{-1} \left(1+{2^{j/m}d(x,y)} \right)^{-(n+1)}  \int_{X}
\big|K_{G_j(L) }(x,z) \big| \left(1+   2^{j/m}d(x, z)  \right)^{ n+D+1 } d\mu(z).
%\\
%&\leq&  C V(x, 2^{-j/m})^{-1} \left(1+{2^{j/m}d(x,y)} \right)^{-(n+1)}\|G_j(2^{j}\cdot)\|_{C^{n+D+1}}.
\end{eqnarray}
Then set $\widetilde{G}_j(\lambda)=G_j(\lambda^m)$. By the Cauchy-Schwarz  inequality  and Lemma~\ref{le2.4},
\begin{align*}
&\int_{X}
\big|K_{G_j(L) }(x,z) \big| \left(1+   2^{j/m}d(x, z)  \right)^{ n+D+1 } d\mu(z)\\
&= \int_{X}
\big|K_{\widetilde{G}_j(\sqrt[m]{L}) }(x,z) \big| \left(1+   2^{j/m}d(x, z)  \right)^{ n+D+1 } d\mu(z)\\
&\leq \left(\int_{X}
\big|K_{\widetilde{G}_j(\sqrt[m]{L}) }(x,z) \big|^2 \left(1+   2^{j/m}d(x, z)  \right)^{ 3n+2D+3 } d\mu(z)\right)^{1/2} \left(\int_{X}
 \left(1+   2^{j/m}d(x, z)  \right)^{ -n-1 } d\mu(z)\right)^{1/2}\\
 &\leq C V(x, 2^{-j/m})^{-1/2}
 \|\delta_{2^{j/m}} \widetilde{G}_j\|_{C^{2n+D+2}} V(x,2^{-j/m})^{1/2}\\
 &\leq C
 \|\delta_{2^{j/m}} \widetilde{G}_j\|_{C^{2n+D+2}}.
\end{align*}
On the other hand,
\begin{eqnarray*}
\|\delta_{2^{j/m}} \widetilde{G}_j\|_{C^{2n+D+2}} &=& \|G_j(2^j\lambda^m)\|_{C^{2n+D+2}}
\leq C \|F_j(2^{j}\lambda^m)e^{\lambda^m}\|_{C^{2n+D+2}}
\\
&=&C\|(1 +2^j\lambda^m)^{-n/2}(1-e^{-2^{mk+j}\lambda^m})\varphi_1(2^jc_0\lambda^m)\phi(\lambda^m)\|_{C^{2n+D+2}}\\
&\leq& C\|(1 +2^j\lambda^m)^{-n/2}\phi(\lambda^m)\|_{C^{2n+D+2}}  \|(1-e^{-2^{mk+j}\lambda^m})\phi(\lambda^m)\|_{C^{2n+D+2}}
\|\varphi_1(2^jc_0\lambda^m)\phi(\lambda^m)\|_{C^{2n+D+2}}\\
&\leq& C(1+2^{j})^{-n/2}
\end{eqnarray*}
and so it follows from \eqref{e2kernel} that
\begin{eqnarray*}
|K_{F_j(L)}(x,y)|&\leq& C  (1+2^{j})^{-n/2}{V(x,2^{-j/m})}^{-1}\left(1+2^{j/m}d(x,y)\right)^{-(n+1)}.
\end{eqnarray*}
Therefore   the kernel $K_{(I+L)^{-n/2}(I-e^{-2^{mk}L}) \varphi_1(c_0L)}(x,y)$ of $ (I +L)^{-n/2}(I-e^{-2^{mk}L})\varphi_1(c_0L) $
satisfies
\begin{eqnarray*}
\left|K_{(I +L)^{-n/2}(I-e^{-2^{mk}L})\varphi_1(c_0L) }(x,y)\right |
&\leq& \sum_{j=-1}^{\infty}  |K_{F_j(L)}(x,y)|\\
&\leq& \sum_{j=-1}^{\infty} C(1+2^{j})^{-n/2} \frac{1}{V(x,2^{-j/m})}\left(1+2^{j/m}d(x,y)\right)^{-(n+1)}
 =:  Q (x,y).
\end{eqnarray*}
for all $x,y\in{X}$  and $t>0$.
Obviously,
\begin{eqnarray*}
 \sup_x\int_X | Q (x,y)| d\mu(y)\leq C;
\end{eqnarray*}
 and the similar argument as in (ii) of Lemma~\ref{le2.2} shows
\begin{eqnarray*}
\sup_{y:\ R\leq d(x,y)\leq 2R}  Q (x,y)\leq C\inf_{y:\ R\leq d(x,y)\leq 2R}  Q (x,y)
\end{eqnarray*}
uniformly for $x\in X$ and $R>0$.
\end{proof}

\bigskip

\section{Proof of Theorem~\ref{th1.1}  }
\setcounter{equation}{0}

   Fix a $f\in L^1$ and $\lambda> \mu(X)^{-1}\|f\|_1$,
we apply  the Calder\'on-Zygmund decomposition at height $\lambda$
to $|f|.$ There  exist  constants $C$ and  $K$ so that
\begin{enumerate}
\item[(i)] $
f=g+b=g+\sum_{j}b_j;
$

\vskip 1pt
\item[(ii)]   $\|g\|_1\leq C\|f\|_1,$
 $\|g\|_{\infty}\leq C\lambda$;

\vskip 1pt
\item[(iii)] $b_j$  is supported in  $B_j$ and  $\# \{j: x\in {9\over 8}B_j\} \leq K$ for all $x\in X$;

\vskip 1pt
\item[(iv)]
$\int_{X}\ |b_j|d\mu\leq C\lambda \mu(B_j), $  and
$\sum_{j}\mu(B_j) \leq C{\lambda}^{-1} \|f\|_1.$
\end{enumerate}
It is easy to see that (ii) implies that $\|g\|_2^2\leq C \lambda\|f\|_1.$

 Let $r_{B_j}$ be the radius of $B_j$ and let
  $$
  J_k=\big\{j: \, 2^k \le r_{B_j}<2^{k+1}\big\}, \ \ {\rm for}\ k\in{\mathbb Z}.
  $$  Write
\begin{eqnarray*}
f =g+\sum_{j}b_j&=&
 g+\sum_{k\leq k_0}\sum_{j\in J_k}b_j+\sum_{k>k_0}\sum_{j\in J_k}b_j\\
 &=:& g+h_1+h_2,
\end{eqnarray*}
where $k_0$ is an integer such that $2^{k_0}\leq \sqrt{1+|t|}<2^{k_0+1}$.
Then it  is enough to show that there exists a constant $C>0$ independent
of  $\lambda$ and $t$ such that
\begin{equation}\label{e3.1}
  \mu\Big(\Big\{x:  \big|(I+L)^{-n/2 }e^{itL}g(x)\big|>\lambda\Big\}\Big)
  \leq
C\lambda^{-1}(1+|t|)^{n/2}\|f\|_1
\end{equation}
and  such that for $i=1,2,$
\begin{equation}\label{e3.2}
  \mu\Big(\Big\{x: \big|(I+L)^{-n/2 }e^{itL} h_i (x)\big|>\lambda\Big\}\Big)
  \leq
C\lambda^{-1}(1+|t|)^{n/2}\|f\|_1.
\end{equation}

 \medskip

 Note that  by (ii) $\lambda^{-1}\|g\|_2^2\leq C\|f\|_1.$
Hence by   spectral
 theory,
\begin{eqnarray*}
  \mu\Big(\Big\{x: \big|(I+L)^{-n/2 }e^{itL}g(x)\big|>\lambda \Big\}\Big)&\leq&
\lambda^{-2}\|(I+L)^{-n/2 }e^{itL}g\|_2^2\leq \lambda^{-2}\|g\|_2^2\\
 & \leq
& C\lambda^{-1}\|f\|_1,
\end{eqnarray*}
which proves \eqref{e3.1}.

\medskip

\noindent
{\underline{Proof of  (\ref{e3.2})   for     $i=1$.}}
Since the Schr\"odinger group $e^{-itL}$ is bounded on $L^2(X)$, we have
\begin{eqnarray}\label{e3.3}
  \mu\Big(\Big\{x: \big|(I+L)^{-n/2 }e^{itL}h_1(x)\big|>\lambda\Big\}\Big)&\leq&
\lambda^{-2}\|e^{itL}(I+L)^{-n/2 } h_1\|_2^2\nonumber\\
&\leq&
\lambda^{-2}\|(I+L)^{-n/2 }h_1\|_2^2.
\end{eqnarray}
Recall that $h_1=\sum_{k\leq k_0}\sum_{j\in J_k}b_j$ with $\supp b_j\subset B_j$.
In the following, we set $B^{\ast}_j={9\over 8}B_j$, a ball  with the same center as $B_j$ but expand $9/8$ times. One has
\begin{eqnarray}\label{e3.4}
 %(I+L)^{-n/2 }(h_1)(x)&=& \sum_{k\leq 0}\sum_{j\in J_k}  \chi_{B^{\ast}_j}(x) (I+L)^{-n/2 }(b_j)(x)\nonumber\\
 %&+& \sum_{k\leq 0}\sum_{j\in J_k}  \chi_{(B^{\ast}_j)^c}(x) (I+L)^{-n/2 }(b_j)(x),
%\end{eqnarray}
%we have
%\begin{eqnarray}\label{eq4.4}
\lambda^{-2}\Big\|(I+L)^{-n/2 }h_1\Big\|_2^2 &\leq&
\lambda^{-2} \Big\|\sum_{k\leq k_0}\sum_{j\in J_k}  \chi_{B^{\ast}_j}  (I+L)^{-n/2 }b_j \Big\|_2^2 \nonumber\\
&+& \lambda^{-2} \Big\|\sum_{k\leq k_0}\sum_{j\in J_k}  \chi_{(B^{\ast}_j)^c} (I+L)^{-n/2 }b_j\Big\|_2^2
 =  I+II.
\end{eqnarray}

\medskip

Let us estimate    the term $I$.
By Lemma~\ref{le2.2}, the kernels $K_{(I +L)^{-n/2} }(x,y)$ of the operators $ (I +L)^{-n/2} $
satisfy
\begin{eqnarray*}
 \left|K_{(I +L)^{-n/2} }(x,y)\right | \leq   C P_1(x,y)
\end{eqnarray*}
for all $x,y\in{X}$ where $ P_1(x,y)$  is a non-negative function satisfying the properties   in Lemma~\ref{le2.2}.
Since the $B_j^{\ast}$'s have bounded overlaps, we apply Minkowski's inequality and (i)  in Lemma~\ref{le2.2} to obtain
\begin{eqnarray}\label{e3.5}
\left\|\sum_{k\leq k_0}\sum_{j\in J_k}  \chi_{B^{\ast}_j}  (I+L)^{-n/2 }b_j \right\|_2^2 &\leq& C
 \sum_{k\leq k_0}\sum_{j\in J_k}  \int_{B^{\ast}_j} \left| (I+L)^{-n/2 }b_j(x)\right|^2 d\mu(x)\nonumber\\
%&\leq& C
 %\sum_{k\leq 0}\sum_{j\in J_k}  \int_{B^{\ast}_j} \left(\int_{X}H^{(1)}(x,y)|b_j(y)|d\mu(y)\right)^2 d\mu(x)\nonumber\\
&\leq& C
 \sum_{k\leq k_0}\sum_{j\in J_k}  \left(\int_X  \|P_1(x,y)\|_{L^2_x}     |b_j(y) | d\mu(y)\right)^2\nonumber\\
&\leq& C
  \sum_{k\leq k_0}\sum_{j\in J_k}  \left(\int_X  V(y,1)^{-1/2}     |b_j(y)| d\mu(y)\right)^2.
\end{eqnarray}
This, in combination with  the doubling condition \eqref{e1.2}  that for every $y\in B_j,$ with $r_{B_j}\leq 2^{k_0}<2\sqrt{1+|t|},$
\begin{eqnarray*}
{1\over V(y,1)}&= &{V(x_{B_j}, 10 r_{B_j})\over V(y, 1)}\cdot {1\over  V(x_{B_j}, 10 r_{B_j})} \leq {V(y, 11 r_{B_j})\over V(y, 1)}\cdot
{1\over \mu(B_j)} \\
&\leq&      {V(y,   r_{B_j})\over V(y, 1)}\cdot
{C\over \mu(B_j)} \leq
{C(1+|t|)^{n/2}\over \mu(B_j)},
\end{eqnarray*}
 yields
\begin{eqnarray*}
I  \leq  C\lambda^{-2}
   \sum_{k\leq k_0}\sum_{j\in J_k} \frac{(1+|t|)^{n/2}}{\mu(B_j)} \left(\int    |b_j(y) | d\mu(y)\right)^2
 &\leq&  C(1+|t|)^{n/2}
\lambda^{-1}  \sum_{k\leq k_0}\sum_{j\in J_k}  \int  |b_j(x)| d\mu(x)\\
&\leq& C(1+|t|)^{n/2}\lambda^{-1}\|f\|_1,
\end{eqnarray*}
 where in the last inequality we used   (iv).

\medskip

Next we estimate  the term $II$.
  Let $x\in (B^{\ast}_j)^c$ and $y\in B_j$,  $ P_1(x,y) $ is equivalent to $ \inf_{y\in {B_j}} P_1(x,y)$
where $P_t(x,y)$ is the function given in Lemma~\ref{le2.2}.
Thus, with $x_{B_j}$ is the center of $B_j$, it follows by Lemma~\ref{le2.2} that
\begin{eqnarray}\label{e3.6}
\sum_{k\leq k_0}\sum_{j\in J_k} \chi_{(B^{\ast}_j)^c}(x) |(1+L)^{-n/2}b_j(x)|
&\leq& C\sum_{k\leq k_0}\sum_{j\in J_k}  \chi_{(B^{\ast}_j)^c}(x)  \int_X P_1(x,y)|b_j(y)|d\mu(y)\nonumber\\
 &\leq& C\sum_{k\leq k_0}\sum_{j\in J_k}   \sup_{y\in B_j} P_1(x,y) \int_X |b_j(y)|d\mu(y)\nonumber\\
 &\leq& C \lambda\sum_{k\leq k_0}\sum_{j\in J_k}   \inf_{y\in B_j}  P_1(x,y)\mu(B_j)\nonumber\\
 &\leq& C\lambda\sum_{k\leq k_0}\sum_{j\in J_k} \int_{B_j}  P_1(x,y) d\mu(y)
 \leq
   C\lambda \int_{X} P_1(x,y) d\mu(y)
 \leq  C\lambda.
\end{eqnarray}
This implies
\begin{eqnarray*}
II&\leq& C
 \lambda^{-2} \int_X \left|\sum_{k\leq k_0}\sum_{j\in J_k}  \chi_{(B^{\ast}_j)^c}(x) (I+L)^{-n/2 }b_j(x)\right|^2 d\mu(x)\nonumber\\
 %&\leq& C
 %\lambda^{-1} \int \Big|\sum_{k\leq 0}\sum_{j\in J_k}  \chi_{(B^{\ast}_j)^c}(x) (I+L)^{-n/2 }(b_j)(x)\Big| d\mu(x)\nonumber\\
 &\leq& C
 \lambda^{-1} \sum_{k\leq k_0}\sum_{j\in J_k} \int_X \left| (I+L)^{-n/2 }b_j(x)\right| d\mu(x)\nonumber\\
 &\leq& C
 \lambda^{-1} \sum_{k\leq k_0}\sum_{j\in J_k} \int_X  |  b_j (y) | d\mu(y)
  \leq  C
 \lambda^{-1} \|f\|_1.
\end{eqnarray*}
Combining the estimates for $I$ and $II$, we obtain  \eqref{e3.2}    for     $i=1$ with  the term $h_1,$ i.e.,
$$
  \mu\Big(\Big\{x: \Big|(I+L)^{-n/2 }e^{itL}\big(h_1\big)(x)\Big|>\lambda\Big\}\Big)
  \leq
C\lambda^{-1}(1+|t|)^{n/2}\|f\|_1.
$$

\medskip

\medskip

\noindent
{\underline{Proof of  (\ref{e3.2})   for     $i=2$.}}
Set $\Omega_t:=\cup_{j}\sqrt{1+|t|}B_j^\ast$. By (iv),  we have
\begin{eqnarray} \label{mm1}
 \mu\Big(\Big \{x\in \Omega_t: \Big|(I+L)^{-n/2 }e^{itL}h_2 (x)\Big|>\lambda\Big\}\Big)
&\leq& C\mu\Big(\cup_{j}\sqrt{1+|t|}B_j^\ast\Big)\nonumber\\
&\leq& C\sum_j (1+|t|)^{n/2}\mu(B_j^\ast)\nonumber\\
&\leq& C(1+|t|)^{n/2}\sum_j \mu(B_j)
 \leq  C(1+|t|)^{n/2}\lambda^{-1}\|f\|_1.
\end{eqnarray}

 Next we show that
\begin{eqnarray} \label{e3.7}
\mu\Big(\Big\{x\in \Omega_t^c: \Big|(I+L)^{-n/2 }e^{itL} h_2 (x)\Big|>\lambda\Big\}\Big)\leq C(1+|t|)^{n/2}\lambda^{-1}\|f\|_1.
\end{eqnarray}
Since for every $j\in J_k, k>k_0$, the function $b_j$ is supported in $B_j$,  and the radius of the ball  $B_j$
is equivalent to $2^k.$ We then decompose
\begin{eqnarray}\label{eww}
(I+L)^{-n/2 }e^{itL}b_j=(I+L)^{-n/2 }e^{itL}e^{-2^{mk}L}b_j + (I+L)^{-n/2 }e^{itL}(I-e^{-2^{mk}L})b_j.
\end{eqnarray}
To analyse the term  $(I+L)^{-n/2 }e^{itL}e^{-2^{mk}L}b_j,$  we need the following result.

\begin{lemma}\label{le3.1}
With the notation above, there exists a constant $C>0$ such that
$$
\left\|\sum_{k> k_0}\sum_{j\in J_k} e^{-2^{mk}L} b_j\right\|_2\leq C\lambda^{1/2}\|f\|_1^{1/2}.
$$
\end{lemma}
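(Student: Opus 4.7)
The plan is a duality/maximal-function argument. Setting $F=\sum_{k>k_0}\sum_{j\in J_k}e^{-2^{mk}L}b_j$, I would compute $\|F\|_2$ as $\sup_{\|g\|_2\leq 1}|\langle F,g\rangle|$. Using self-adjointness of $e^{-2^{mk}L}$,
$$\langle F,g\rangle=\sum_{k>k_0}\sum_{j\in J_k}\langle b_j,e^{-2^{mk}L}g\rangle.$$
Since $b_j$ is supported in $B_j$ with $\|b_j\|_1\leq C\lambda\mu(B_j)$ by (iv), the right side is bounded by $C\lambda\sum_{k>k_0}\sum_{j\in J_k}\mu(B_j)\sup_{B_j}|e^{-2^{mk}L}g|$.

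The crucial pointwise estimate I would establish is that, for each $j\in J_k$, the semigroup operator $e^{-2^{mk}L}$ acts at exactly the scale of $B_j$ (since $(2^{mk})^{1/m}=2^k\sim r_{B_j}$), so
$$\sup_{x\in B_j}|e^{-2^{mk}L}g(x)|\leq C\inf_{y\in B_j}Mg(y),$$
where $M$ is the Hardy--Littlewood maximal operator on $(X,d,\mu)$. This follows from a standard annular decomposition: one splits the integral $\int K_{2^{mk}}(x,z)g(z)\,d\mu(z)$ according to $d(x,z)\sim 2^\ell r_{B_j}$, uses the doubling property to pass between balls centered at $x\in B_j$ and at $y\in B_j$ (losing only a factor $C 2^{\ell n}$), and absorbs this loss into the Gaussian tail $e^{-c 2^{\ell m/(m-1)}}$ of \eqref{GE}. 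Averaging the resulting pointwise bound over $B_j$ yields $\mu(B_j)\sup_{B_j}|e^{-2^{mk}L}g|\leq C\int_{B_j}Mg\,d\mu$.

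The remainder is then routine: by the bounded overlap property (iii), $\sum_{k>k_0}\sum_{j\in J_k}\chi_{B_j}\leq K$, so
$$\sum_{k>k_0}\sum_{j\in J_k}\int_{B_j}Mg\,d\mu\leq K\int_{\bigcup_j B_j}Mg\,d\mu\leq K\,\|Mg\|_2\,\mu\bigl(\textstyle\bigcup_j B_j\bigr)^{1/2}$$
by Cauchy--Schwarz. Since $M$ is bounded on $L^2$ on a space of homogeneous type, $\|Mg\|_2\leq C\|g\|_2\leq C$, and (iv) provides $\mu(\bigcup_j B_j)\leq\sum_j\mu(B_j)\leq C\lambda^{-1}\|f\|_1$. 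Combining,
$$|\langle F,g\rangle|\leq C\lambda\cdot\bigl(\lambda^{-1}\|f\|_1\bigr)^{1/2}\|g\|_2=C\lambda^{1/2}\|f\|_1^{1/2}\|g\|_2,$$
which gives the claimed estimate upon taking the supremum.

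The main obstacle, and the only step requiring real care, is the semigroup-to-maximal-function domination valid \emph{uniformly in $k$}, since the scale-matching $2^k\sim r_{B_j}$ is exactly what makes the estimate independent of $k$. An alternative $TT^*$ expansion of $\langle F,F\rangle$ into a double sum over $(k,k')$ and $(j,j')$ is possible but forces one to exploit off-diagonal Gaussian decay in both indices simultaneously, which is considerably more cumbersome than the duality route above.
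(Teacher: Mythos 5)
Your proposal is correct and takes essentially the same approach as the paper: both prove the bound by duality, dominate the semigroup at scale $2^{mk}\sim r_{B_j}^m$ by the Hardy--Littlewood maximal operator (the paper via the Harnack-type inequality \eqref{e2.3} for $h_t$, you via a direct annular decomposition with the same doubling/Gaussian trade-off), and then conclude with the bounded-overlap property, Cauchy--Schwarz, and $L^2$-boundedness of $M$. The only cosmetic difference is that the paper dominates $e^{-2^{mk}L}b_j$ pointwise by $\lambda\int h_{\theta 2^{mk}}(\cdot,y)\chi_{B_j}(y)\,d\mu(y)$ and then pairs with a test function, whereas you first move the semigroup onto $g$ by self-adjointness; these are interchangeable.
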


\begin{proof}
The  proof is obtained by using the argument as in \cite[estimate (10), p. 241]{DM}. We give a brief argument
of this proof for completeness and convenience for the reader.

Since the  functions
$
 h_t(x,y) $
   in \eqref{GE} satisfy a Harnack-type inequality  \eqref{e2.3}, one has that for every $j\in J_k,$
\begin{eqnarray*}
\left|e^{-2^{mk}L} b_j(x)\right|&\leq& \int_{X} h_{2^{mk}}(x,y) b_j(y) d\mu(y)\\
&\leq& \|b_j\|_1 \sup_{y\in B_j} h_{2^{mk}}(x,y) \\
&\leq&
 C\lambda \mu(B_j)\inf_{y\in B_j} h_{\theta 2^{mk}}(x,y)
 \leq  C\lambda \int_X   h_{\theta 2^{mk}}(x,y) \chi_{B_j}(y) d\mu(y).
\end{eqnarray*}
Denoting by $M$ the Hardy-Littlewood maximal operator, we then have for every $u\in L^2(X)$
\begin{eqnarray*}
\left|\langle |u|,  e^{-2^{mk}L} b_j\rangle  \right| &\leq&   C\lambda \int_X\int_X |u(x)|   h_{\theta 2^{mk}}(x,y) \chi_{B_j}(y)d\mu(y)d\mu(x)\\
&\leq&  C\lambda \langle M|u|,  \chi_{B_j}\rangle.
\end{eqnarray*}
Since $M$ is bounded on $L^2(X)$, we use the properties (iii) and (iv) to see that
\begin{eqnarray*}
\left\|\sum_{k> k_0}\sum_{j\in J_k} e^{-2^{mk}L} b_j\right\|_2 &\leq&   C\lambda \left\| \sum_{k> k_0}\sum_{j\in J_k}
  \chi_{B_j}\right\|_2\leq C\lambda \left(\sum_j \mu(B_j)\right)^{1/2}\\
  &\leq& C\lambda^{1/2}\|f\|_1^{1/2}.
\end{eqnarray*}
This finishes the proof of Lemma~\ref{le3.1}.
\end{proof}
 \medskip

 \noindent
 {\bf Back to the proof of Theorem~\ref{th1.1}.}\  Now we apply Lemma~\ref{le3.1} to obtain
\begin{eqnarray}\label{ewww}
 \mu\Big(\Big\{x: \Big|\sum_{k>k_0}\sum_{j\in J_k}((I+L)^{-n/2 }e^{itL}e^{-2^{mk}L}b_j)(x)\Big|>\lambda\Big\}\Big)
&\leq& C \lambda^{-2} \Big\|\sum_{k>k_0}\sum_{j\in J_k}(I+L)^{-n/2 }e^{itL}e^{-2^{mk}L}b_j\Big\|_2^2\nonumber\\
&\leq& C  \lambda^{-2} \Big\|\sum_{k>k_0}\sum_{j\in J_k} e^{-2^{mk}L}b_j \Big\|_2^2\nonumber\\
&\leq& C  \lambda^{-1} \|f\|_1.
\end{eqnarray}

Concerning  the term $(I+L)^{-n/2 }e^{itL}(I-e^{-2^{mk}L})b_j$ in \eqref{eww}, we
  let   $\varphi_1$ be a smooth function in Lemma~\ref{le2.3} with supp $\varphi_1\subset [1/2, \infty]$ and $\varphi_1(\lambda)=1$ on $[1, \infty]$, and
let
 $
\varphi_0(\lambda) =1-\varphi_1(\lambda) .
 $
One writes
$$
(I+L)^{-n/2 }e^{itL}(I-e^{-2^{mk}L})b_j=e^{itL}F_k(L)b_j + e^{itL}G_k(L)b_j,
$$
 where
$$
F_k(L)=(I+L)^{-n/2}(I-e^{-2^{mk}L})\varphi_0(2^{-m(k-k_0)/(m-1)}L)
$$
and
$$
G_k(L)=(I+L)^{-n/2}(I-e^{-2^{mk}L})\varphi_1(2^{-m(k-k_0)/(m-1)}L).
$$
Hence
\begin{eqnarray}\label{ewwww}
%&&\mu\big(\{x\in \Omega_t^c:(I+L)^{-n/2 }e^{itL}(h_2)(x)>\lambda\}\big)\\
%&=&\mu\big(\{x\in \Omega_t^c:(I+L)^{-n/2 }e^{itL}(\sum_{k>0}\sum_{j\in J_k}b_j)(x)>\lambda\}\big)\\
 \hspace{2cm}  &&\hspace{-3.5cm} \mu\left( \left\{x\in \Omega_t^c: \Big|\sum_{k>0}\sum_{j\in J_k}(I+L)^{-n/2 }
 e^{itL}(I-e^{-2^{mk}L})b_j(x)\Big|>\lambda\right\}\right) \nonumber\\
&\leq& C\mu\left( \left \{x\in \Omega_t^c: \Big|\sum_{k>0}\sum_{j\in J_k}e^{itL}F_k(L)b_j(x)\Big|>\lambda/2 \right\}\right) \nonumber\\
&+&
C\mu\left(\left\{x\in \Omega_t^c: \Big|\sum_{k>0}\sum_{j\in J_k}e^{itL}G_k(L)b_j(x)\Big|>\lambda/2\right\}\right)
 =: III+IV.
\end{eqnarray}

\medskip

 As to be seen later, the term $III$  is the major one.  To do it, we define a Besov type norm of $F$ by
$$
\|F\|_{{{\bf B}^{s}}}:=\int_{-\infty}^{\infty} |\widehat{F}(\tau)|(1+|\tau|)^{s}d\tau,
$$
 where ${\widehat f}$ denotes the Fourier transform of $f$.
Since for every  functions $F$ and $G$,  it can be checked that
 \begin{eqnarray*}
 \|FG\|_{{{\bf B}^{s}}}&=&\int_{-\infty}^{\infty} |\widehat{(FG)}(\tau)|(1+|\tau|)^{s}d\tau\\
 %&=& \int_{-\infty}^{\infty} |\big(\widehat{F}\ast \widehat{G}\big)(\tau)|(1+|\tau|)^{s}d\tau\\
 &\leq & \int_{-\infty}^{\infty} \int_{-\infty}^{\infty}\big|  (\widehat{F} (\tau-\eta)   \widehat{G}(\eta) \big|
 |(1+|\tau-\eta|)^{s} |(1+|\eta|)^{s}d\eta  d\tau\\
 %&\leq & \left(\int_{-\infty}^{\infty} |\widehat{F}(\tau)|(1+|\tau|)^{s}d\tau\right) \left(\int_{-\infty}^{\infty} |\widehat{G}(\eta)|(1+|\tau|)^{s}d\eta\right)
\end{eqnarray*}
 and so by the Fubini theorem,
$$
\|FG\|_{{{\bf B}^{s}}}\leq \|F\|_{{{\bf B}^{s}}}\|G\|_{{{\bf B}^{s}}}.
$$
We show the following estimate  for the Schr\"odinger group $\{e^{itL}\}_{t\geq 0}$, which plays a key role in the proof
of the term $III$.

\begin{lemma}\label{le3.2}  Suppose  that $(X, d, \mu)$ is  a  space of homogeneous type  with a dimension $n$.  Suppose that $L$
satisfies the property \eqref{GE}.
Let $\varphi_0$ be a smooth function  with supp $\varphi_0\subset [0,1]$ and $\varphi_0(\lambda)=1$ on $[0,1/2]$.
For $k>0$ and $t\in \R$, denote $F_k(L):=(1+L)^{-n/2}(I-e^{-2^{mk}L})\varphi_0(2^{-m(k-k_0)/(m-1)}L)$ where $k_0$ in an integer such that $2^{k_0}\leq \sqrt{1+|t|}<2^{k_0+1}$.
Then for some fixed $c_1>1$,
\begin{eqnarray}\label{e3.8}
\int_{d(x,y)>  c_1\sqrt{1+|t|}2^k} \big|K_{e^{itL}F_k(L)}(x,y)\big| d\mu(y)\leq C(1+|t|)^{n/2}
\end{eqnarray}
with a constant  $C>0$ independent on $k>k_0$ and $t$.
\end{lemma}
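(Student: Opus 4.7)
The strategy is to rewrite $e^{itL}F_k(L)$ as a spectral multiplier in $\sqrt[m]{L}$, dyadically decompose this multiplier in the variable $\mu=\sqrt[m]{L}$, and on each piece combine Lemma~\ref{le2.4} with Cauchy--Schwarz and the doubling property to obtain an $L^1$ off-diagonal bound. The desired growth $(1+|t|)^{n/2}$ will arise from balancing, over the dyadic scales, the oscillation produced by the chirp $e^{it\mu^m}$ against the off-diagonal decay from the weight $(1+Rd(x,y))^s$.

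\emph{Setup.} Set $\mu := \sqrt[m]{L}$ and
\[
\Psi_k^t(\mu) := F_k(\mu^m)\,e^{it\mu^m},
\]
so that $\Psi_k^t(\sqrt[m]L) = e^{itL}F_k(L)$. Since $\varphi_0$ is supported in $[0,1]$, the symbol $\Psi_k^t$ is supported in $\mu\in[0,\,2^{\ell_\ast}]$, where $\ell_\ast := (k-k_0)/(m-1)$. Using the smooth dyadic partition $\sum_\ell\phi(2^{-\ell}\mu) = 1$ from \eqref{e2.6}, write $\Psi_k^t = \sum_{\ell\le\ell_\ast}\Psi_{k,\ell}^t$ with $\Psi_{k,\ell}^t(\mu) := \Psi_k^t(\mu)\phi(2^{-\ell}\mu)$ supported in $\mu\sim 2^\ell$, the tail at the smallest $\ell$ being absorbed in the usual way.

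\emph{Estimate per scale.} For each dyadic piece apply Lemma~\ref{le2.4} with $R = 2^\ell$ (after using the symmetry $K_{\bar F(L)}(x,y)=\overline{K_{F(L)}(y,x)}$ to swap the roles of $x$ and $y$), together with Cauchy--Schwarz against the doubling-volume bound on dyadic shells inside $\{d(x,y)>A\}$, to obtain, for $s > n$ and $\epsilon > 0$,
\[
\int_{d(x,y) > A}\bigl|K_{\Psi_{k,\ell}^t(\sqrt[m]L)}(x,y)\bigr|\,d\mu(y) \le C\,(2^\ell A)^{(n-s)/2}\,\|\delta_{2^\ell}\Psi_{k,\ell}^t\|_{C^{s/2+\epsilon}},
\]
with $A = c_1\sqrt{1+|t|}\,2^k$. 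A direct Leibniz computation on the rescaled symbol $\delta_{2^\ell}\Psi_{k,\ell}^t(\mu)=F_k(2^{m\ell}\mu^m)\,e^{it\,2^{m\ell}\mu^m}\,\phi(\mu)$ yields
\[
\|\delta_{2^\ell}\Psi_{k,\ell}^t\|_{C^{s/2+\epsilon}} \le C\,(1+2^{m\ell})^{-n/2}\,(1+|t|\cdot 2^{m\ell})^{s/2+\epsilon},
\]
the first factor coming from the resolvent part of $F_k$, the second from the chirp, while the factors $1-e^{-2^{mk+m\ell}\mu^m}$ and $\varphi_0(2^{m\ell-m\ell_\ast}\mu^m)$ contribute only bounded derivatives on the support. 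Inserting into the previous display and summing over $\ell\le\ell_\ast$, the dominant contribution comes from the high-frequency end $\ell = \ell_\ast$, where $2^{m\ell_\ast}\sim 2^{mk/(m-1)}(1+|t|)^{-m/(2(m-1))}$ by virtue of $2^{k_0}\sim\sqrt{1+|t|}$, and the three factors combine to give exactly $(1+|t|)^{n/2}$ uniformly in $k$, provided $s$ is chosen just above $n$.

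\emph{Main obstacle.} The delicate part is the exponent bookkeeping in the final summation. A naive application of Cauchy--Schwarz produces a growth $(1+|t|)^{s/4+n/4+\epsilon}$ per dyadic scale, which exceeds $(1+|t|)^{n/2}$ for any $s > n$; only the constraint $\ell\le(k-k_0)/(m-1)$ coming from the spectral cutoff $\varphi_0$, together with the calibration $2^{k_0}\sim\sqrt{1+|t|}$, forces the desired cancellation between the chirp growth, the off-diagonal decay and the resolvent factor. The multiplicative structure expressed by the Besov inequality $\|FG\|_{\mathbf{B}^s}\le\|F\|_{\mathbf{B}^s}\|G\|_{\mathbf{B}^s}$ displayed just before the lemma is the conceptual reason this balance works, and the whole point of estimate \eqref{e3.8} is that it captures the correct $(1+|t|)^{n/2}$-bound of the $L^1$-norm of the kernel exactly at the critical propagation radius $\sqrt{1+|t|}\,2^k$, corresponding physically to the maximal group velocity $m(\sqrt[m]L)^{m-1}\lesssim 2^{k-k_0}$ on the spectral support of $F_k$.
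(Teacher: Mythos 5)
Your overall strategy — dyadic decomposition of the symbol in the spectral variable, Cauchy--Schwarz against a weighted $L^2$ kernel estimate, and a Leibniz-type bound on the rescaled symbol — is the same as the paper's (the paper uses Fourier inversion together with estimate \eqref{e2.5} and the Besov norm $\|\cdot\|_{\mathbf{B}^{s/2}}$ rather than invoking Lemma~\ref{le2.4} directly, but these are interchangeable for compactly supported dyadic pieces). However, there is a genuine gap in your symbol estimate that breaks the convergence of the sum.

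You assert that the factor $1-e^{-2^{mk+m\ell}\mu^m}$ ``contributes only bounded derivatives on the support,'' and consequently your Hölder-norm bound
\[
\|\delta_{2^\ell}\Psi_{k,\ell}^t\|_{C^{s/2+\epsilon}} \le C\,(1+2^{m\ell})^{-n/2}\,(1+|t|\cdot 2^{m\ell})^{s/2+\epsilon}
\]
contains no smallness as $\ell \to -\infty$. But on $\mu \sim 1$, when $2^{m(k+\ell)} \ll 1$, the function $1-e^{-2^{mk+m\ell}\mu^m}$ is itself $\approx 2^{m(k+\ell)}$, not merely smooth; this supplies the crucial factor $\min\{1,\,2^{m(k+\ell)}\}$ (the paper's $\min\{1,\,2^{\ell+mk}\}$ in its $L$-scale indexing, claim \eqref{e3.10}). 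Without it, your per-scale bound tends to a nonzero constant as $\ell\to-\infty$: the off-diagonal weight $(1+2^\ell A)^{(n-s)/2}$ and the chirp/resolvent factors all tend to $1$ once $2^\ell A \lesssim 1$ and $2^{m\ell}\lesssim 1$, so $\sum_{\ell\to-\infty} I_\ell$ diverges. Your conclusion that ``the dominant contribution comes from the high-frequency end $\ell=\ell_\ast$'' is therefore unjustified as written; the low end is divergent, not subdominant. Restoring the $\min\{1, 2^{m(k+\ell)}\}$ factor makes the tail sum geometric (as in the paper's fourth sum $\sum_{\ell\le -mk}2^{(mk+\ell)(1+(n-s)/(2m))}$), and only then does the balancing over $\ell$ close with the bound $(1+|t|)^{n/2}$. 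Notice also that this is precisely why the decomposition \eqref{eww} was introduced in the first place: the heat-regularizing factor $e^{-2^{mk}L}$ was peeled off to be handled separately in Lemma~\ref{le3.1}, and the remainder $I-e^{-2^{mk}L}$ is what provides the low-frequency cancellation your estimate discards.
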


\begin{proof}
Let
$\phi$ be an auxiliary  non-negative $C_c^{\infty}$ function on $\mathbb R$ as in \eqref{e2.6}. By the spectral theory,
  we write
\begin{eqnarray}\label{e3.9}
e^{itL}F_k(L) = \sum_{\ell=-\infty}^{m(k-k_0)/(m-1)}e^{itL }(I+L)^{-n/2}(I-e^{-2^{mk}L})\varphi_0(2^{-m(k-k_0)/(m-1)}L)\phi(2^{-\ell}L).
\end{eqnarray}
Set $F_{k,\ell}(L)=e^{itL }(I+L)^{-n/2}(I-e^{-2^{mk}L})\varphi_0(2^{-m(k-k_0)/(m-1)}L)\phi(2^{-\ell}L).$
Then the kernels $K_{F_{k,\ell}(L)}(x,y)$ of the operators $F_{k,\ell}(L)$ satisfies
 $$
 \int_{d(x,y)>c_1\sqrt{1+|t|}2^k} \big|K_{F_{k,\ell}(L)}(x,y)\big| d\mu(y)\leq
  I_{\ell}(x,y),
  $$
where
$$
I_{\ell}(x,y)=\left(\int_{X} \big|K_{F_{k,\ell}(L)}(x,y)\big|^2 (1+2^{\ell/m}d(x,y))^sd\mu(y)\right)^{1/2}
  \left(\int_{d(x,y)> c_1\sqrt{1+|t|}2^k}   (1+2^{\ell/m}d(x,y))^{-s}d\mu(y)\right)^{1/2}
  $$
  for  some  $n<s<n+1/2$.
Next we set  $G(\lambda)=\big(\delta_{2^{\ell}} F_{k,\ell}(\lambda)\big)e^{\lambda}.$
Thus in virtue of the Fourier inversion formula
$$
F_{k,\ell}(L)=G(2^{-\ell}L)e^{-2^{-\ell}L}=\frac{1}{2\pi}\int_{\R} e^{(i\tau-1)2^{-\ell}L}\widehat{G}(\tau) d\tau.
$$
By \eqref{e2.5}, we get
\begin{eqnarray*}
 &&\left(\int_{X} \big|K_{F_{k,\ell}(L)}(x,y)\big|^2 (1+2^{\ell/m}d(x,y))^sd\mu(y)\right)^{1/2}\\
 &\leq&  \frac{1}{2\pi}\int_{\R} \left(\int_{X} \big|K_{e^{(i\tau-1)2^{-\ell}L}}(x,y)\big|^2 (1+2^{\ell/m}d(x,y))^sd\mu(y)\right)^{1/2} |\widehat{G}(\tau)|d\tau\\
 &\leq& CV(x, 2^{-\ell/m})^{-1/2}\int_{\mathbb R} | {\widehat G}(\tau)| (1+|\tau|)^{s/2} d\tau\\
&\leq& CV(x, 2^{-\ell/m})^{-1/2}\|G\|_{{\bf B}^{s/2 } }.
\end{eqnarray*}
%Following  estimate (4.13) of \cite{CDLY2},  we have that
%\begin{eqnarray*}
 %\int_{\mathbb R} | {\hat G}|(\tau) (1+|\tau|)^{s/2} d\tau &\leq& C2^{(s-n)\ell/2}(1+|t|)^{s/2}.
%\end{eqnarray*}
To go on, we claim  that
\begin{eqnarray}\label{e3.10}
\|G\|_{{\bf B}^{s/2 } }
%&=&\|[\delta_{2^{\ell}} F_{k,\ell}(\lambda)]e^{\lambda}\|_{{\bf B}^{s/2 } }\nonumber\\
%&=&\|\delta_{2^{\ell}} F_{k,\ell}(\lambda) \|_{{\bf B}^{s/2 } }\nonumber\\
%&=&\int_{-\infty}^{\infty}|\widehat{\delta_{2^{\ell}}F_{k,\ell}}(\tau)|(1+|\tau|)^{s/2}d\tau\nonumber\\
&\leq&
C \min\{1,2^{\ell+mk}\}\min\{1,2^{-\ell n/2}\}\max\{1, (2^\ell(1+|t|))^{s/2}\}.
\end{eqnarray}
Let us show the claim \eqref{e3.10}.
Now we let
 $\eta\in C_c^\infty({\mathbb R})$  with  $\supp\eta\subset [1/8, 2]$ and $\eta(\lambda)=1$ for $\lambda\in [1/4,1]$.
One has
\begin{eqnarray*}
&&\|G\|_{{\bf B}^{s/2 }}=\|[\delta_{2^{\ell}} F_{k,\ell}(\lambda)]e^{\lambda}\|_{{\bf B}^{s/2 } }\\
&=& \| \phi(\lambda)e^{it2^{\ell}\lambda}(1+2^\ell \lambda)^{-n/2}(1-e^{-2^{mk+\ell}\lambda})\varphi_0(2^{-m(k-k_0)/(m-1)+\ell}\lambda)e^{\lambda}\|_{{\bf B}^{s/2} }\\
&\leq& \| \eta(\lambda)\varphi_0(2^{-m(k-k_0)/(m-1)+\ell}\lambda)\|_{{\bf B}^{s/2 } }
\| \eta(\lambda)e^{\lambda}(1-e^{-2^{mk+\ell}\lambda})\|_{{\bf B}^{s/2 } }
\| \phi(\lambda)e^{it2^{\ell}\lambda}(1+2^\ell \lambda)^{-n/2}\|_{{\bf B}^{s/2 } }\\
&\leq& C\| \eta(\lambda)\varphi_0(2^{-m(k-k_0)/(m-1)+\ell}\lambda)\|_{{C}^{s/2 +2} }
\| \eta(\lambda)e^{\lambda}(1-e^{-2^{mk+\ell}\lambda})\|_{{C}^{s/2+2 } }
\| \phi(\lambda)e^{it2^{\ell}\lambda}(1+2^\ell \lambda)^{-n/2}\|_{{\bf B}^{s/2 } }.
\end{eqnarray*}
Note that $\ell\leq m(k-k_0)/(m-1)$ and $\supp \eta\subset[1/8,2]$,
$$
\| \eta(\lambda)\varphi_0(2^{-m(k-k_0)/(m-1)+\ell}\lambda)\|_{{C}^{s/2 +2} }\leq C_s
$$
and
$$
\| \eta(\lambda)e^{\lambda}(1-e^{-2^{mk+\ell}\lambda})\|_{{C}^{s/2+2 } }\leq C_s \min\{1,2^{\ell+mk}\}
$$
with $C_s$ independent of $k$ and $\ell$. Let us  estimate $\| \phi(\lambda)e^{it2^{\ell}\lambda}(1+2^\ell \lambda)^{-n/2}\|_{{\bf B}^{s/2 } }$.
The Fourier transform ${\mathcal F} \big(\phi e^{it2^{\ell}\cdot}(1+2^\ell \cdot)^{-n/2}\big)(\tau)$
of $\phi(\lambda)e^{it2^{\ell}\lambda}(1+2^\ell \lambda)^{-n/2}$ is given by
\begin{eqnarray*}
{\mathcal F} \big(\phi e^{it2^{\ell}\cdot}(1+2^\ell \cdot)^{-n/2}\big)(\tau)=\int_{\mathbb{R}} \phi(\lambda)\frac{e^{i( 2^\ell t-\tau)\lambda}}{(1+2^\ell\lambda)^{n/2}}d\lambda.
\end{eqnarray*}
Integration by parts gives for every $N\in \mathbb{N}$,
\begin{eqnarray*}
\left|{\mathcal F} \big(\phi e^{it2^{\ell}\cdot}(1+2^\ell \cdot)^{-n/2}\big)(\tau)\right|\leq
C_N(1+2^{\ell})^{-n/2}(1+|2^\ell t-\tau|)^{-N},
\end{eqnarray*}
which yields for $s>n$,
\begin{eqnarray*}
\|  \phi(\lambda)e^{it2^{\ell}\lambda}(1+2^\ell \lambda)^{-n/2}\|_{{\bf B}^{s/2 } }&\leq& C\min\{1,2^{-\ell n/2}\}\int_{\mathbb{R}} (1+|2^\ell t-\tau|)^{-N}
(1+|\tau|)^{s/2} d\tau\\
%&\leq& C\min\{1,2^{-\ell n/2}\}\left(1+2^\ell t\right)^\alpha\\
%&\leq& C\min\{1,2^{-\ell n/2}\}\left(1+2^\ell\right)^\alpha\left(1+ t\right)^\alpha\\
&\leq& C\min\{1,2^{-\ell n/2}\}(1+2^\ell t)^{s/2}\\
&\leq& C\min\{1,2^{-\ell n/2}\}\max\{1, (2^\ell(1+|t|))^{s/2}\}.
\end{eqnarray*}
Hence,
\begin{eqnarray*} %\label{bb}
\|G\|_{{\bf B}^{s/2 } }
&\leq&
C \min\{1,2^{\ell+mk}\}\min\{1,2^{-\ell n/2}\}\max\{1, (2^\ell(1+|t|))^{s/2}\}.
\end{eqnarray*}
This proves our claim  \eqref{e3.10}. From this, we see that
\begin{eqnarray*}
I_{\ell}(x,y)\leq  C  \min\{1,2^{\ell+mk}\}\min\{1,2^{-\ell n/2}\}\max\{1, (2^\ell(1+|t|))^{s/2}\}
  (1+ c_1 \sqrt{1+|t|}2^{k+{\ell\over m}})^{(n-s)/2},
 \end{eqnarray*}
where we have used the fact that
\begin{eqnarray*}
 \int_{d(x,y)>c_1\sqrt{1+|t|}2^k}   (1+2^{\ell/m}d(x,y))^{-s}d\mu(y)
 \leq CV(x, 2^{-\ell/m}) (1+ c_1 \sqrt{1+|t|}2^{k+{\ell\over m}})^{n-s}.
\end{eqnarray*}
 See
\cite[Lemma 4.4]{DOS}. Therefore,
\begin{eqnarray*}
 \int_{d(x,y)>c_1\sqrt{1+|t|}2^k} \big|K_{e^{itL}F_k(L)}(x,y)\big| d\mu(y)
 &\leq&
\sum_{\ell=-\infty}^{m(k-k_0)/(m-1)}I_{\ell}(x,y)\\
&\leq& C\sum_{\ell=1}^{m(k-k_0)/(m-1)}(1+|t|)^{(n+s)/4}2^{k(n-s)/2}2^{(1-1/m)(s-n)\ell/2}\\
&& +C\sum_{\ell=-2k_0}^{0} (1+|t|)^{(n+s)/4}2^{k(n-s)/2} 2^{(s+(n-s)/m)\ell/2}\\
&& +C\sum_{\ell=-mk}^{-2k_0} (1+|t|)^{(n-s)/4} 2^{k(n-s)/2} 2^{(n-s)\ell/(2m)} \\
&&+ C\sum_{\ell=-\infty}^{-mk}(1+|t|)^{(n-s)/4} 2^{(mk+\ell)(1+(n-s)/(2m))}\\
%&\leq& C (1+|t|)^{n/2}\left( \sum_{\ell=1}^{m(k-k_0)/(m-1)} 2^{k(n-s)/2} 2^{(1-1/m)(s-n)\ell/2}\right.\\
% &&\left.+\sum_{\ell=-mk}^{0} 2^{(n-s)\ell/(2m)} 2^{k(n-s)/2}
%  + \sum_{\ell=-\infty}^{-mk} 2^{(mk+\ell)(1+(n-s)/(2m))}\right)\\
&\leq& C(1+|t|)^{n/2}.
\end{eqnarray*}
The proof of Lemma~\ref{le3.2} is complete.
\end{proof}

\medskip

We now apply  Lemma~\ref{le3.2} to estimate the term $III$  to obtain
\begin{eqnarray}\label{mm2}
III&=&C\mu\Big(\{x\in \Omega_t^c: \Big|\sum_{k>k_0}\sum_{j\in J_k}e^{itL}F_k(L)b_j (x)\Big|>\lambda/2\}\Big)\nonumber\\
&\leq&
C\lambda^{-1}\sum_{k>k_0}\sum_{j\in J_k} \int_{\Omega_t^c} |e^{itL}F_k(L)b_j(x)| d\mu(x)\nonumber\\
&\leq& C\lambda^{-1}\sum_{k>k_0}\sum_{j\in J_k} \int_{(\sqrt{1+|t|}B_j^\ast)^c} \int_{X}
|K_{e^{itL}F_k(L)}(x,y)||b_j(y)|d\mu(y) d\mu(x)\nonumber\\
&\leq& C\lambda^{-1}\sum_{k>k_0}\sum_{j\in J_k} \int_{X} \int_{(\sqrt{1+|t|}B_j^\ast)^c}
|K_{e^{itL}F_k(L)}(x,y)|d\mu(x)|b_j(y)| d\mu(y)\nonumber\\
&\leq& C\lambda^{-1}(1+|t|)^{n/2}\sum_{k>k_0}\sum_{j\in J_k} \int_{X} |b_j(y)| d\mu(y)\nonumber\\
&\leq& C\lambda^{-1}(1+|t|)^{n/2}\|f\|_1.
\end{eqnarray}

\medskip

Concerning the term $IV$,
since the Schr\"odinger group $e^{-itL}$ is bounded on $L^2(X)$, we have
\begin{eqnarray}\label{e3.11}
IV&\leq& C
\lambda^{-2}\Big\| \sum_{k>k_0}\sum_{j\in J_k} e^{itL} G_k(L)b_j  \Big\|_2^2
\nonumber\\
&\leq&
C\lambda^{-2}\Big\| \sum_{k>k_0}\sum_{j\in J_k}G_k(L)  b_j\Big\|_2^2 \nonumber\\
&\leq& C\lambda^{-2} \left\|\sum_{k>k_0}\sum_{j\in J_k}  \chi_{B^{\ast}_j}  G_k(L)  b_j \right\|_2^2
 +  C\lambda^{-2} \left\|\sum_{k>k_0}\sum_{j\in J_k}  \chi_{(B^{\ast}_j)^c} G_k(L)  b_j\right\|_2^2
 =  IV_1+IV_2.
\end{eqnarray}
We point out that the terms $IV_1$ and $IV_2$ can be handled similarly to the terms $I$ and $II$ with some modifications, respectively.

To estimate  the  term $IV_1$, we note that
\begin{eqnarray*}
&&\|G_k(L)b_j\|_2\\
&\leq& \|G_k(L)(I+2^{-m(k-k_0)/(m-1)}L)^{n/2}\|_{2\to 2} \|(I+2^{-m(k-k_0)/(m-1)}L)^{-n/2}b_j\|_2\\
&\leq&     \|(1+\lambda)^{-n/2} (1-e^{-2^{mk}\lambda}) \varphi_1(2^{-m(k-k_0)/(m-1)}\lambda)(1+2^{-m(k-k_0)/(m-1)}\lambda)^{n/2}\|_{\infty}
\|(I+2^{-m(k-k_0)/(m-1)}L)^{-n/2}b_j\|_2\\
&\leq&  C2^{-\frac{m(k-k_0)n}{2(m-1)}}\|(I+2^{-m(k-k_0)/(m-1)}L)^{-n/2}b_j\|_2.
\end{eqnarray*}
Since the $B_j^{\ast}$'s have bounded overlaps, we apply Minkowski's inequality and the property (i) in Lemma~\ref{le2.2} to obtain
\begin{eqnarray}\label{e3.12}
\left\|\sum_{k>k_0}\sum_{j\in J_k}  \chi_{B^{\ast}_j}  G_k(L)  b_j \right\|_2^2 &\leq& C
 \sum_{k>k_0}\sum_{j\in J_k}
 \int_{X} \left| G_k(L) b_j(x)\right|^2 d\mu(x)\nonumber\\
 &\leq& C
 \sum_{k>k_0}\sum_{j\in J_k}  2^{-\frac{m(k-k_0)n}{ m-1 }}
 \int_{X} \left| (I+2^{-m(k-k_0)/(m-1)}L)^{-n/2}  b_j(x)\right|^2 d\mu(x)\nonumber\\
%&\leq& C
 %\sum_{k>0}\sum_{j\in J_k}  \int_{B^{\ast}_j} \left(\int_{X}H^{(1)}(x,y)|b_j(y)|d\mu(y)\right)^2 d\mu(x)\nonumber\\
&\leq& C
 \sum_{k>k_0}\sum_{j\in J_k} 2^{-\frac{m(k-k_0)n}{ m-1 }} \left(\int_X  \|P_{2^{-m(k-k_0)/(m-1)}}(x,y)\|_{L^2_x}     |b_j(y) | d\mu(y)\right)^2\nonumber\\
&\leq& C
  \sum_{k>k_0}\sum_{j\in J_k} 2^{-\frac{m(k-k_0)n}{ m-1 }} \left(\int_X  V(y,2^{- (k-k_0)/(m-1)})^{-1/2}     |b_j(y)| d\mu(y)\right)^2.
\end{eqnarray}
This, in combination with  the doubling condition \eqref{e1.2}  that for every $y\in B_j,$
\begin{eqnarray*}
{1\over V(y,2^{- (k-k_0)/(m-1)})} =  {V(x_{B_j}, 2^k)\over V(y, 2^{- (k-k_0)/(m-1)})}\cdot {C\over  V(x_{B_j}, 2^k)}
&\leq& {V(y, 2^{k+1})\over V(y, 2^{- (k-k_0)/(m-1)})}\cdot
{C\over \mu(B_j)} \\
&\leq&   C 2^n  (1+|t|)^{n/2} 2^{\frac{m(k-k_0)n}{ m-1}}
{1\over \mu(B_j)},
\end{eqnarray*}
 yields
\begin{eqnarray*}
IV_1  \leq  C\lambda^{-2}
   \sum_{k>k_0}\sum_{j\in J_k} \frac{(1+|t|)^{n/2}}{\mu(B_j)} \left(\int    \left|b_j(y)\right| d\mu(y)\right)^2
 &\leq&  C (1+|t|)^{n/2}
\lambda^{-1}  \sum_{k>k_0}\sum_{j\in J_k}  \int \left|b_j(y)\right| d\mu(y)\\
&\leq& C(1+|t|)^{n/2}\lambda^{-1}\|f\|_1.
\end{eqnarray*}

Now we prove the  term $IV_2$. Let $x\in (B^{\ast}_j)^c$ and $y\in B_j$,  $ Q(x,y) $ is equivalent to
$ \inf_{y\in {B_j}} Q(x,y),$
where $Q (x,y)$ is the function given in Lemma~\ref{le2.3}.
Thus, with $x_{B_j}$ is the center of $B_j$, we have
that for a fixed $x\in (B^{\ast}_j)^c$, it follows from (ii) in Lemma~\ref{le2.3}  that
\begin{eqnarray}\label{e3.13}
\sum_{k>k_0}\sum_{j\in J_k} \chi_{(B^{\ast}_j)^c}(x) |G_k(L) b_j(x)|
&\leq& C\sum_{k>k_0}\sum_{j\in J_k}  \chi_{(B^{\ast}_j)^c}(x)  \int_X Q(x,y)|b_j(y)|d\mu(y)\nonumber\\
 &\leq& C\sum_{k>k_0}\sum_{j\in J_k}   \sup_{y\in B_j} Q(x,y) \int_X |b_j(y)|d\mu(y)\nonumber\\
 &\leq& C \lambda\sum_{k>k_0}\sum_{j\in J_k}   \inf_{y\in B_j}  Q(x,y)\mu(B_j)\nonumber\\
 &\leq& C\lambda\sum_{k>k_0}\sum_{j\in J_k} \int_{B_j}  Q(x,y) d\mu(y)
 \leq
   C\lambda \int_{X} Q (x,y) d\mu(y)
 \leq  C\lambda.
\end{eqnarray}
This implies
\begin{eqnarray*}
IV_2&\leq& C
 \lambda^{-2} \int_X \left|\sum_{k>k_0}\sum_{j\in J_k}  \chi_{(B^{\ast}_j)^c}(x) G_k(L)  b_j (x)\right|^2 d\mu(x)\nonumber\\
 %&\leq& C
 %\lambda^{-1} \int \Big|\sum_{k>0}\sum_{j\in J_k}  \chi_{(B^{\ast}_j)^c}(x) (I+L)^{-n/2 }(b_j)(x)\Big| d\mu(x)\nonumber\\
 &\leq& C
 \lambda^{-1} \sum_{k>k_0}\sum_{j\in J_k} \int_X \left| G_k(L)  b_j (x)\right| d\mu(x)\nonumber\\
 &\leq& C
 \lambda^{-1} \sum_{k>k_0}\sum_{j\in J_k} \int_X  |  b_j (y) | d\mu(y)
  \leq  C
 \lambda^{-1} \|f\|_1.
\end{eqnarray*}
From  the estimates for $IV_1$ and $IV_2$,
$IV \leq
C\lambda^{-1}(1+|t|)^{n/2}\|f\|_1.
$
This combined with \eqref{mm1}, \eqref{e3.7}, \eqref{eww}, \eqref{ewww}, \eqref{ewwww} and \eqref{mm2} yields
   \eqref{e3.2}    for     $i=2$.
The proof of the theorem  now follows from \eqref{e3.1}  and  \eqref{e3.2} .
\hfill$\square$

\bigskip

Recall that for
every $p\in (1, \infty)$,
    Miyachi (\cite{Mi1}) showed that   there exists a positive constant $C=C(n,p)$ independent of $t$ and $f$ so that
\begin{eqnarray}\label{e4.2n}
 \left\|  (1+\Delta)^{-s} e^{it\Delta} f\right\|_{L^p(\mathbb R^n)} \leq C  (1+|t|)^{s}\|f\|_{L^p (\mathbb R^n)},
  \ \ \ t\in{\mathbb R}, \ \ \ s= n\big|{1\over  2}-{1\over  p}\big|.
\end{eqnarray}
The estimate \eqref{e4.2n} is sharp
for the growth in $t,$ that is, the factor $(1+|t|)^{s}$ can not be improved.
  Following
\cite{Mi1}, we show that  for $p=1,$ the factor $(1+|t|)^{n/2}$ in our Theorem~\ref{th1.1}
is the best possible  in the case that when $L$ is the classical Laplacian on $\R^n$. This can be seen by the following result.

\begin{prop}\label{prop3.1}
 For every $t\geq 1$, there exists  a constant $C=C(n)$ independent of $t$ such that
$$
\|(1+\Delta)^{-n/2}e^{it\Delta}\|_{L^1({\mathbb R^n})\to L^{1,\infty}({\mathbb R^n})}\geq C t^{n/2}.
$$
\end{prop}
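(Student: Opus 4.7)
The plan is to reduce the lower bound on the operator norm to a pointwise lower bound on the convolution kernel. Since $(I+\Delta)^{-n/2}e^{it\Delta}$ is a Fourier multiplier, it acts on Schwartz functions as convolution with
$$K_t(x)=\frac{1}{(2\pi)^n}\int_{\mathbb R^n} e^{i(x\cdot\xi+t|\xi|^2)}\,(1+|\xi|^2)^{-n/2}\,d\xi.$$
If one can show that $|K_t(x)|\gtrsim t^{-n/2}$ on a set of measure $\gtrsim t^n$, then testing against a bump approximating the Dirac mass will produce the claimed weak-type lower bound.

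First I would perform a stationary phase analysis on $K_t$. The phase $\phi_x(\xi)=x\cdot\xi+t|\xi|^2$ has a unique critical point $\xi_0=-x/(2t)$ with Hessian $2t\,I$ and critical value $\phi_x(\xi_0)=-|x|^2/(4t)$. Writing $a(\xi)=(1+|\xi|^2)^{-n/2}$ for the amplitude, the standard stationary phase expansion should yield
$$K_t(x)=c_n\, t^{-n/2}\,e^{-i|x|^2/(4t)-i\pi n/4}\,\Bigl(1+\tfrac{|x|^2}{4t^2}\Bigr)^{-n/2}+R_t(x),$$
with $c_n\neq 0$ and a remainder $R_t(x)$ that is smaller than the main term by a factor $O(1/t)$ uniformly in the regime $|x|\le t$, $t\ge 1$. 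Because $a$ is not compactly supported but decays only like $|\xi|^{-n}$ at infinity, I would localize with a cutoff $\chi$ supported near $\xi_0$ and control the tail $(1-\chi)a$ by repeated integration by parts, gaining factors of $(|x|+2t|\xi|)^{-1}$ from the transpose of the operator $-i\nabla\phi_x/|\nabla\phi_x|^2\cdot\nabla$. From the resulting expansion one extracts $|K_t(x)|\ge c'_n t^{-n/2}$ uniformly on $\{|x|\le t\}$ for all sufficiently large $t$.

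Next I would choose $f=\phi$, where $\phi\in C_c^\infty(\mathbb R^n)$ is nonnegative, supported in $B(0,1)$, with $\int\phi=1$, so that $\|f\|_{L^1}=1$. Then $(I+\Delta)^{-n/2}e^{it\Delta}f=K_t*\phi$. Because the envelope of the main term of $K_t$ varies on scale $t\gg 1$ and its phase $e^{-i|x|^2/(4t)}$ varies on scale $\sqrt t\gg 1$ for large $t$, convolution with a bump supported in $B(0,1)$ perturbs both envelope and phase by amounts $o(1)$ uniformly on $\{|x|\le t/2\}$. Hence $|(I+\Delta)^{-n/2}e^{it\Delta}f(x)|\ge \tfrac{1}{2}c'_n t^{-n/2}$ on a set of measure $\gtrsim t^n$. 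Setting $\lambda=\tfrac{1}{4}c'_n t^{-n/2}$,
$$\lambda\cdot\bigl|\{x:|(I+\Delta)^{-n/2}e^{it\Delta}f(x)|>\lambda\}\bigr|\gtrsim t^{-n/2}\cdot t^n=t^{n/2}=t^{n/2}\|f\|_{L^1},$$
which gives the desired lower bound on the $L^1\to L^{1,\infty}$ norm.

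The main obstacle will be making the stationary phase analysis quantitative enough to produce a pointwise \emph{lower} bound on $|K_t|$; upper bounds are standard, but ensuring that the remainder is strictly dominated by the main term uniformly for $|x|\le t$ requires careful bookkeeping of both the size of the amplitude at $\xi_0$ and the contribution from the region $|\xi|\gg 1$, where $a(\xi)$ is small but not negligible. Once the kernel lower bound is secured, the passage from $K_t$ to $K_t*\phi$ and the weak-type lower bound are routine.
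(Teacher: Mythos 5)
Your proposal is correct in outline, but it follows a genuinely different route from the paper's. The paper chooses a more structured test function, namely $f = c\mathcal{F}^{-1}\bigl(\phi(|\xi|)(1+|\xi|^2)^{-1}\bigr)$ where $\phi$ is a smooth cutoff vanishing for $|\xi|\le 1/2$: the extra factor $(1+|\xi|^2)^{-1}$ guarantees $f\in L^1$ with $\|f\|_1$ a fixed constant, and the frequency cutoff $\phi$ places the stationary point of $x\cdot\xi+t|\xi|^2$ strictly inside the support of the symbol when $|x|\gtrsim t$. The paper then simply invokes Miyachi's Lemma 1 to get the pointwise lower bound $\bigl|(1+\Delta)^{-n/2}e^{it\Delta}f(x)\bigr|\ge C_1\,t|x|^{-n/2-1}$ on $\{|x|\ge C_2 t\}$, and taking $\lambda\sim t^{-n/2}$ turns the superlevel set into the annulus $\{C_2 t\le |x|<(1+C_2)t\}$ of measure $\sim t^n$. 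You instead take $f$ to be a spatial bump approximating a Dirac mass, analyze the full kernel $K_t$ of the operator by stationary phase, and extract the lower bound from the ball $\{|x|\lesssim t\}$ where the stationary point sits near the origin of frequency space. Both routes yield $\lambda\cdot|\{\,|Tf|>\lambda\,\}|\gtrsim t^{n/2}$ with $\lambda\sim t^{-n/2}$. What the paper's choice of $f$ buys is that the amplitude is compactly supported away from the origin, so the stationary phase expansion and the lower bound can be cited directly from Miyachi rather than rederived; what your choice buys is self-containedness, at the price of having to regularize the only conditionally convergent kernel integral and track the tail of the slowly decaying amplitude $(1+|\xi|^2)^{-n/2}$, as you correctly flag. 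In carrying this out you should make sure that each integration by parts away from the stationary point gains a factor of order $(t|\xi-\xi_0|)^{-1}$, and that you iterate more than $n/2$ times so the tail contribution is strictly $o(t^{-n/2})$; the student's remark that the remainder is merely ``smaller by a factor $O(1/t)$'' is not automatically enough when $n\ge 2$ and a fixed cutoff radius is used, though repeated integration by parts fixes this.
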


\begin{proof}
Let $\phi\in C^\infty(1/2,\infty)$ with $\supp \phi \subset(1/2,\infty)$ and $\phi=1$ on $[1,\infty)$. We define
$$
f=c\mathcal{F}^{-1}(\phi(|\xi|)(1+|\xi|^2)^{-1})
$$
with  some constant $c>0$ so that     $\|f\|_1=1$. Then
$$
(1+\Delta)^{-n/2}e^{it\Delta}f(x)=\mathcal{F}^{-1}(\phi(|\xi|)e^{it|\xi|^2}(1+|\xi|^2)^{-n/2-1})(x).
$$
Following \cite[Lemma 1]{Mi1}, there exist constants $C_1$ and $C_2$ such that for  $|x|\geq C_2t,$
$$
|(1+\Delta)^{-n/2}e^{it\Delta}f(x)|\geq C_1 t|x|^{-n/2-1}, \ \ \  \ t>1.
$$
Then we have
\begin{eqnarray*}
\Big\|(1+\Delta)^{-n/2}e^{it\Delta}f\Big\|_{L^{1,\infty}}&=&\sup_{\lambda>0}\lambda
 \Big|\Big\{x\in \R^n: \big|(1+\Delta)^{-n/2}e^{it\Delta}f(x)\big|>\lambda\Big\}\Big|\\
&\geq& \sup_{\lambda>0}\lambda \Big| \Big\{|x|\geq C_2 t: \big|C_1 t|x|^{-n/2-1}\big|>\lambda\Big\}\Big|.
\end{eqnarray*}
Letting  $\lambda=C_1t^{-n/2}/(1+C_2)^{n/2+1}$, we have
\begin{eqnarray*}
\|(1+\Delta)^{-n/2}e^{it\Delta}f\|_{L^{1,\infty}}
&\geq& \frac{C_1t^{-n/2}}{(1+C_2)^{n/2+1}}\Big|\Big\{|x|\geq C_2 t: |C_1 t|x|^{-n/2-1}|>\frac{C_1t^{-n/2}}{(1+C_2)^{n/2+1}}\Big\}\Big|\\
&\geq& C t^{-n/2}\Big|\Big\{x: C_2 t \leq |x|<(1+C_2)t\Big\}\Big|\\
&\geq& C(1+|t|)^{n/2}.
\end{eqnarray*}
This finishes the proof of Proposition~\ref{prop3.1}.
\end{proof}

Our results are applicable to Schr\"odinger  group for large classes of operators
 including  elliptic
 operators on compact manifolds, Schr\"odinger operators with non-negative potentials
 and
 Laplace operators acting on Lie groups of polynomial growth (see for example, \cite{CDLY, DN, DOS, JN} and the references therein).
For example,
  we consider   the Schr\"odinger operator
 $$
 L=-\Delta+V\ \ \ \ \ \ \  {\rm on\ }  {\mathbb R^n, \ \ \ \ n\geq 1,}
 $$
 where $V: {\Bbb R}^n\rightarrow {\Bbb R}, V\in L^1_{\rm loc}({\Bbb R}^n)$ and
   $V\geq 0.$ The operator $L$ is defined by the quadratic form.
 The
Feynman-Kac formula implies that the semigroup kernels
$p_t(x,y)$, associated to $e^{-tL}$, satisfy the estimates

\begin{equation}\label{e8.4}
0\leq  p_t(x,y)\leq (4\pi t)^{-{n\over 2}}\exp\left(-\frac{|x-y|^2}{4t}\right)\ \  \ {\rm for\
all\ }\  t>0, \ \ x,y\in{\Bbb R}^n.
\end{equation}
See page 195 of \cite{O}.

  \begin{thm}\label{prop3.2}  Assume that
   $L=-\Delta+V$ where $\Delta$ is the standard Laplace operator acting on ${\Bbb R}^n $  and
   $  V\in L^1_{\rm loc}({\Bbb R}^n )$ is a non-negative function.
Then we have

 \begin{itemize}
\item[(i)] The operator $(I+L)^{-n/2 }e^{itL}$ is of weak type $(1,1)$, that is,
there is a constant $C$,  independent of $t$ and $f$ so that  for $\lambda > 0$,
\begin{eqnarray*}
  \left| \{x\in \RN: |(I+L)^{-n/2 }e^{itL} f(x)|>\lambda \}\right|
  \leq C\lambda^{-1}(1+|t|)^{n/2} {\|f\|_{L^1(\RN)} }, \ \ \ t\in{\mathbb R}.
\end{eqnarray*}

\item[(ii)] For every $1<p<\infty$ and $s\geq n|{1/p-1/2}|$, the operator $(I+L)^{s }e^{itL}$ is   bounded on $L^p(\RN)$,
  and there exists a    constant $C$, independent of $t$ and $f$ so that
 \begin{eqnarray*}
 \left\| (I+L)^{-s }e^{itL} f\right\|_{L^p(\RN)} \leq C (1+|t|)^{s} \|f\|_{L^p(\RN)}, \ \ \ t\in{\mathbb R}.
\end{eqnarray*}
\end{itemize}
 \end{thm}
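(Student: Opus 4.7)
\medskip
\noindent\textbf{Proof plan.} The proof will be essentially a verification that the Schr\"odinger operator $L=-\Delta+V$ with $V\in L^1_{\rm loc}(\mathbb{R}^n)$ and $V\geq 0$ satisfies the hypotheses of Theorem~\ref{th1.1} and of the $L^p$-boundedness result \eqref{e1.5} proved in \cite{CDLY}. I will first observe that $(\mathbb{R}^n,|\cdot|,dx)$ is a space of homogeneous type with the dimension parameter $n$ (in fact $V(x,r)=c_n r^n$, so the doubling condition \eqref{e1.1} and the volume growth \eqref{e1.2} hold). Second, the Feynman--Kac bound \eqref{e8.4} gives precisely the Gaussian upper bound \eqref{GE} with $m=2$ (the factor $V(x,t^{1/2})\sim t^{n/2}$ being built into the $(4\pi t)^{-n/2}$ prefactor), so $L$ satisfies condition \eqref{GE}.

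With these two verifications in place, part (i) of the statement follows at once from Theorem~\ref{th1.1}: the weak type $(1,1)$ inequality with the claimed $(1+|t|)^{n/2}$ growth is just the abstract conclusion \eqref{e1.6} specialized to $X=\mathbb{R}^n$.

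For part (ii), at the critical value $s_0=n|1/p-1/2|$ the bound
\[
\|(I+L)^{-s_0}e^{itL}f\|_{L^p(\mathbb{R}^n)}\leq C(1+|t|)^{s_0}\|f\|_{L^p(\mathbb{R}^n)}
\]
is precisely \eqref{e1.5} applied to $L=-\Delta+V$, which is admissible by the verification above. For the supercritical range $s\geq s_0$, I will factor
\[
(I+L)^{-s}e^{itL}=(I+L)^{-(s-s_0)}\,(I+L)^{-s_0}e^{itL},
\]
and use that for every $\beta\geq 0$ the operator $(I+L)^{-\beta}$ is a contraction on $L^p(\mathbb{R}^n)$, $1<p<\infty$. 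This contraction property follows from the subordination identity
\[
(I+L)^{-\beta}=\frac{1}{\Gamma(\beta)}\int_0^{\infty}e^{-\tau}\tau^{\beta-1}e^{-\tau L}\,d\tau,
\]
together with the fact that \eqref{e8.4} forces $e^{-\tau L}$ to be a sub-Markovian semigroup and hence a contraction on $L^p$. Combining this with the critical-index estimate and the trivial inequality $(1+|t|)^{s_0}\leq (1+|t|)^{s}$ yields the desired bound with constant independent of $t$.

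The main (and only substantive) obstacle is to confirm that \eqref{GE} truly holds for $L=-\Delta+V$ with a merely locally integrable non-negative potential; this is standard via the Feynman--Kac formula and is recorded in \cite{O}, so no new analytic input is required. Everything else in the argument is a direct citation of Theorem~\ref{th1.1}, of \eqref{e1.5}, and of the elementary subordination bound for $(I+L)^{-\beta}$.
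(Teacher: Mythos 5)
Your proposal is correct and follows essentially the same route as the paper: the paper's proof of Theorem~\ref{prop3.2} is just the one-line observation that the Feynman--Kac bound \eqref{e8.4} yields \eqref{GE} with $m=2$ on the space of homogeneous type $(\mathbb{R}^n,|\cdot|,dx)$, so the result is a direct consequence of Theorem~\ref{th1.1} and \cite[Theorem 1.1]{CDLY}. The only thing you add is the explicit subordination/contraction argument handling the supercritical range $s>n|1/p-1/2|$, which the paper leaves implicit (it is also covered by the earlier estimate \eqref{e1.555} from \cite{CCO}); your argument is valid since the non-negative, sub-Markovian kernel bound makes $e^{-\tau L}$ an $L^p$-contraction for all $1\le p\le\infty$.
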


\begin{proof}
This result is a consequence of
 Theorem~\ref{th1.1} and \cite[Theorem 1]{CDLY}.
\end{proof}

 We note that    under suitable  additional assumptions this result
  can be extended by a similar proof to situation of magnetic Schr\"odinger operators
  acting on a complete Riemannian manifold with non-negative potentials.

\bigskip

 \section{Schr\"odinger groups  on measurable
subsets of a space of
homogeneous type}
 \setcounter{equation}{0}

 We assume in this section that $\Omega$ is a measurable
subset of a space of
homogeneous type $(X, d, \mu)$. An example of $\Omega$ is
an open domain
of the Euclidean space ${\mathbb R}^n$.  If $\Omega$ possesses
certain smoothness
on its boundary, for example Lipschitz boundary, then it is a space
of homogeneous type and the results of Sections 2  and 3 are
applicable. However,  a general
measurable set $\Omega$ needs not satisfy the doubling property,
hence it is not a space of homogeneous type. Such a measurable set
$\Omega$ appears
naturally in boundary value problems, for example   partial differential
equations with Dirichlet boundary conditions.

 In this section we are interested in dealing with  Schr\"odinger groups  in those contexts.
 As it is pointed out in \cite{DM}, one can extend the singular operators defined in $\Omega$
 to the space $X.$ Since there is no assumption on the regularity of the kernels in
 space variables, the extension of the kernel still satisfies similar conditions. Given $T$, a bounded linear operator
 on $L^p(\Omega), 1<p<\infty$, the extension of $T$ to $X$ is defined as
\begin{eqnarray}\label{TT}
{\widetilde T}f(x)=
\left\{
\begin{array}{ll} T\big(f\chi_{\Omega}\big)(x),  \ &x\in \Omega\\[6pt]
0, \ &x\not\in \Omega,
\end{array}
\right.
\end{eqnarray}
where $\chi_{\Omega}$ is the characterization function on $\Omega$. Then  $T$ is bounded
 on $L^p(\Omega)$ if and only if ${\widetilde T}$ is bounded  on $L^p(X).$
 Also $T$ is of weak type $(1,1)$ on $\Omega$ if and only if ${\widetilde T}$ is of weak type $(1,1)$ on $X.$
  If $K$ is the kernel of $T$, then the associated kernel of ${\widetilde T}$  is given by
 \begin{eqnarray*}
{\widetilde K}(x,y))=
\left\{
\begin{array}{ll}
K(x,y),  \ &{\rm when}\ x\in\Omega \ {\rm and}\ y\in \Omega\\[6pt]
0, \ & {\rm otherwise}.
\end{array}
\right.
\end{eqnarray*}
 As it is observed in \cite{DM},
  the assumptions on the kernels do not involve their regularity so they imply similar
  properties on the kernels of the extended operators.

 The following result gives an example of Schr\"ordiner groups on spaces without the doubling condition.

  \begin{thm}\label{prop4.1} Suppose that  $\Delta_\Omega$ is the Laplace operator with Dirichlet
  boundary condition $\Omega\subseteq {\Bbb R}^n$.
Then we have

 \begin{itemize}
\item[(i)] The operator $(I+\Delta_\Omega)^{-n/2 }e^{it\Delta_\Omega}$ is of weak type $(1,1)$, that is,
there is a constant $C$,  independent of $t$ and $f$ so that
\begin{eqnarray*}
  \left| \{x\in \Omega: |(I+\Delta_\Omega)^{-n/2 }e^{it\Delta_\Omega} f(x)|>\lambda \}\right|
  \leq C\lambda^{-1}(1+|t|)^{n/2} {\|f\|_{L^1(\Omega)} }, \ \ \ t\in{\mathbb R}
\end{eqnarray*}
 for $\lambda > 0$ when $|\Omega| = \infty$ and   $\lambda> \|f\|_1/|\Omega|$ when $|\Omega|< \infty$.

\item[(ii)] For every $1<p<\infty$ and $s\geq n|{1/p-1/2}|$, the operator $(I+\Delta_\Omega)^{s }e^{it\Delta_\Omega}$ is   bounded on $L^p(\Omega)$,
  and there exists a    constant $C$, independent of $t$ and $f$ so that

 \begin{eqnarray*}
 \left\| (I+\Delta_\Omega)^{-s }e^{it\Delta_\Omega} f\right\|_{L^p(\Omega)}  \leq C (1+|t|)^{s} \|f\|_{L^p(\Omega)} , \ \ \ t\in{\mathbb R}.
\end{eqnarray*}
\end{itemize}
 \end{thm}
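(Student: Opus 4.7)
My plan is to reduce both parts of the theorem to Theorem~\ref{th1.1} (and, for part (ii), to \cite[Theorem~1]{CDLY}) by means of the extension procedure \eqref{TT}. The key observation is that while $\Omega$ itself may fail the doubling property, the ambient Euclidean space $(\R^n,|\cdot|,dx)$ is a space of homogeneous type of dimension $n$, and zero-extension of kernels preserves both the Gaussian upper bound \eqref{GE} and weak type $(1,1)$ boundedness.

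First I would verify that the Dirichlet heat kernel $p_t^\Omega(x,y)$ of $\Delta_\Omega$ satisfies \eqref{GE} with $m=2$ once extended by zero to $\R^n\times\R^n$. By the parabolic maximum principle (equivalently, the Feynman-Kac formula with an infinite confining potential on $\Omega^c$, as in the argument producing \eqref{e8.4}),
\[
0\le p_t^\Omega(x,y) \le (4\pi t)^{-n/2}\exp\!\left(-\tfrac{|x-y|^2}{4t}\right),\qquad x,y\in\Omega,\ t>0,
\]
and the zero-extension continues to satisfy this Gaussian bound on all of $\R^n\times\R^n$. Hence the hypothesis \eqref{GE} of Theorem~\ref{th1.1} is in force on $(\R^n,|\cdot|,dx)$ for the extended kernel.

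For part (i), I would set $T=(I+\Delta_\Omega)^{-n/2}e^{it\Delta_\Omega}$ and denote its extension via \eqref{TT} by $\widetilde T$. As observed in \cite{DM} and in the discussion preceding this section, the proof of Theorem~\ref{th1.1} uses only the Gaussian bound together with its consequences (Lemmas~\ref{le2.1}--\ref{le2.3}, Lemma~\ref{le3.2}) and the $L^2$ spectral calculus of $\Delta_\Omega$ on $L^2(\Omega)$, which is undisturbed by zero-extension. Because the kernel of $\widetilde T$ agrees with $K_T$ on $\Omega\times\Omega$ and vanishes elsewhere, every integral of $|\widetilde K|$ over $\R^n$ or over a Euclidean ball reduces to the corresponding integral over $\Omega$; all kernel estimates of Sections~2 and~3 therefore transfer verbatim, and applying Theorem~\ref{th1.1} on $(\R^n,|\cdot|,dx)$ produces the weak type $(1,1)$ bound for $\widetilde T$. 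By the equivalence recorded after \eqref{TT}, this is exactly the stated bound for $T$ on $\Omega$.

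For part (ii), I would apply Stein analytic interpolation to the family $T_z:=(I+\Delta_\Omega)^{-z}e^{it\Delta_\Omega}$ on the strip $0\le\Re z\le n/2$: on $\Re z=0$ the spectral theorem yields $\|T_z\|_{2\to 2}\le 1$, while on $\Re z=n/2$ part (i) gives the weak type $(1,1)$ bound with constant $C(1+|t|)^{n/2}$. Interpolation delivers the desired $L^p$ estimate with the sharp exponent $s=n|1/p-1/2|$ for $p\in(1,2]$, and self-adjointness gives $p\in[2,\infty)$ by duality; the extension to all $s\ge n|1/p-1/2|$ follows from the $L^p$-contractivity of $(I+\Delta_\Omega)^{-\alpha}$ for $\alpha\ge 0$, which rests on the sub-Markovian property of the Dirichlet heat semigroup. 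Alternatively one can directly invoke \cite[Theorem~1]{CDLY}, whose proof is equally insensitive to doubling and transfers by the same extension device. The only real obstacle in the argument is checking that nothing in the proof of Theorem~\ref{th1.1} (the Calder\'on--Zygmund decomposition at height $\lambda$, the Harnack-type inequality \eqref{e2.3}, the integrated bounds of Lemma~\ref{le2.4}, etc.) secretly invokes the doubling property of $\Omega$ beyond what the ambient $\R^n$ already supplies; as in \cite{DM}, because no regularity of the kernel in the space variables is assumed, this turns out to be a purely bookkeeping matter rather than a substantive one.
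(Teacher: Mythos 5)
For part (i) and for the alternative route you sketch in part (ii), you follow exactly the paper's extension strategy: zero-extend the Dirichlet heat kernel to $\R^n\times\R^n$, observe it still satisfies \eqref{GE} with $m=2$, extend $T$ via \eqref{TT}, and then apply Theorem~\ref{th1.1} (respectively \cite[Theorem 1.1]{CDLY}) on the ambient doubling space $(\R^n,|\cdot|,dx)$; the paper makes precisely this move and likewise leans on Duong--McIntosh \cite{DM} for the observation that, since the kernel hypotheses do not involve regularity in the space variables, they transfer verbatim under zero-extension.

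Your \emph{primary} route for part (ii), however, has a gap. Stein analytic interpolation for the family $T_z=(I+\Delta_\Omega)^{-z}e^{it\Delta_\Omega}$ on the strip $0\le\Re z\le n/2$ requires estimates along the \emph{entire} boundary line $\Re z=n/2$, i.e.\ a weak $(1,1)$ bound for $(I+\Delta_\Omega)^{-n/2-i\tau}e^{it\Delta_\Omega}$ with admissible (at most exponential) growth in $\tau$, together with $L^2$ bounds along $\Re z=0$. The $L^2$ side is fine since purely imaginary powers of a non-negative self-adjoint operator are isometries on $L^2$, but part (i) only delivers the real point $z=n/2$. Nothing in the paper establishes the weak $(1,1)$ bound for the $\tau$-shifted operator, and obtaining it would require re-running the whole proof of Theorem~\ref{th1.1} (Lemma~\ref{le3.2} in particular) while tracking the dependence on the imaginary power $(I+L)^{-i\tau}$ --- that is a substantive task, not bookkeeping. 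The paper wisely sidesteps this by citing \cite[Theorem 1.1]{CDLY} directly, which is also your fallback and is the correct route. Your concluding step --- extending from $s=n|1/p-1/2|$ to all $s\ge n|1/p-1/2|$ via $L^p$-contractivity of $(I+\Delta_\Omega)^{-\alpha}$ for $\alpha\ge 0$, which follows from the sub-Markovian property of the Dirichlet heat semigroup --- is sound.
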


\begin{proof}   We point out that
  $$
  0\leq  K_{\exp(-t\Delta_{\Omega})}(x,y)\leq {1\over (4\pi t)^{n/2}} \exp\Big(-{|x-y|^2\over 4t}\Big)
  $$
  (see e.g., Example 2.18, \cite{D}).
  That is  the heat kernels  corresponding to $\Delta_{\Omega}$
 satisfying Gaussian bounds  \eqref{GE} with $m=2$.

 Denote $T={ (I+\Delta_\Omega)^{-s }e^{it\Delta_\Omega}}$  with  $s=n/2$ for $p=1$ and  $s\geq n|{1/p-1/2}| $ for $ 1<p<\infty$. And denote by $\widetilde T$ the extension of
 $T$ as in \eqref{TT}.
 Then (i) and (ii) of Theorem~\ref{prop4.1} follow  from
 Theorem~\ref{th1.1}  and \cite[Theorem 1.1]{CDLY} by first applying to the extended operator ${\widetilde{ T}}$ and then restricting to
$T$, respectively.
This completes the proof of Theorem~\ref{prop4.1}.
\end{proof}

 \bigskip

\noindent
{\bf Acknowledgements}:
P. Chen is supported by NNSF of China 11501583, Guangdong Natural Science Foundation
2016A030313351.
Duong and Li are supported by the Australian Research Council (ARC) through the
research grants DP160100153 and  DP190100970 and by Macquarie University Research Seeding Grant.
L. Song is supported by  NNSF of China (No. 11622113) and NSF for distinguished Young Scholar of Guangdong Province (No. 2016A030306040).
 Yan is supported by the NNSF
of China, Grant No. ~11521101 and  ~11871480, and Guangdong Special Support Program.

 \vskip 1cm

 \end{document}